\allowdisplaybreaks \numberwithin{equation}{section}
\numberwithin{equation}{section}
\newtheorem{theorem}{Theorem}[section]
\newtheorem{proposition}[theorem]{Proposition}
\newtheorem{corollary}[theorem]{Corollary}
\newtheorem{lemma}[theorem]{Lemma}
\theoremstyle{definition}
\theoremstyle{remark}
\newtheorem{remark}[theorem]{Remark}
\begin{document}		
	\title
	[vortex sheet in domain]{Existence of stationary vortex sheets for the 2D Euler equation}
	
	\author{Daomin Cao, Guolin Qin, Changjun Zou}
	
	\address{Institute of Applied Mathematics, Chinese Academy of Sciences, Beijing 100190, and University of Chinese Academy of Sciences, Beijing 100049,  P.R. China}
	\email{dmcao@amt.ac.cn}
	
	\address{Institute of Applied Mathematics, Chinese Academy of Sciences, Beijing 100190, and University of Chinese Academy of Sciences, Beijing 100049,  P.R. China}
	\email{qinguolin18@mails.ucas.edu.cn}
	
	\address{Institute of Applied Mathematics, Chinese Academy of Sciences, Beijing 100190, and University of Chinese Academy of Sciences, Beijing 100049,  P.R. China}
	\email{zouchangjun17@mails.ucas.ac.cn}
	
	\thanks{This work was supported by NNSF of China Grant 11831009 and Chinese Academy of Sciences (No. QYZDJ-SSW-SYS021).}

	\begin{abstract}
           We investigate a steady planar flow of an ideal fluid in a (bounded or unbounded) domain  $\Omega\subset \mathbb{R}^2$. Let $\kappa_i\not=0$, $i=1,\ldots, m$, be $m$ arbitrary fixed constants. For any given non-degenerate critical point $\mathbf{x}_0=(x_{0,1},\ldots,x_{0,m})$ of the Kirchhoff-Routh function defined on $\Omega^m$ corresponding to $(\kappa_1,\ldots, \kappa_m)$,  we construct a family of stationary planar flows with  vortex sheets that have large vorticity amplitude and are perturbations of small circles centered near $x_i$, $i=1,\ldots,m$.  The proof is accomplished via the implicit function theorem with suitable choice of function spaces. This seems to be the first nontrivial result on the existence of stationary vortex sheets in domains.
	\end{abstract}
	
	\maketitle{\small{\bf Keywords:} Euler equation, vortex sheets, non-degenerate, the Birkhoff-Rott operator, implicit function theorem. \\
		
	{\bf 2020 MSC} Primary: 76B47; Secondary: 35Q31, 76B03.}

\section{Introduction}
Let $\Omega\subset\mathbb{R}^2$ be a bounded or unbounded domain. We consider the stationary Euler equation
\begin{equation}\label{1-1}
	\begin{cases}
		\mathbf{u}\cdot \nabla 	\mathbf{u}=-\nabla P&\quad \text{in}\, \Omega,\\
		\nabla\cdot 	\mathbf{u}=0&\quad \text{in}\, \Omega,\\
		\mathbf{u}\cdot \nu=0 &\quad \text{on}\, \partial\Omega,
	\end{cases}
\end{equation}
where $\mathbf{u}=(u_1,u_2)$ is the velocity field, $P$ is the scalar pressure and $\nu$ is the outward unit normal of $\partial \Omega$.

In a planar flow, the vorticity is defined by  the curl of the velocity field, this is,  $\omega=curl \mathbf{u}:=\partial_1u_2-\partial_2u_1$. Taking the curl of the equation \eqref{1-1}, we find that $\omega$ satisfies the following vorticity equation
\begin{equation}\label{1-2}
	\mathbf{u}\cdot \nabla \omega =0\quad\text{in}\,\Omega.
\end{equation}
The velocity is recovered by the Biot-Savart law
$$
\ \mathbf{u}=\nabla^\perp(-\Delta)^{-1}\omega,
$$
where $(x_1,x_2)^\perp=(x_2,-x_1)$ and the operator $(-\Delta)^{-1}$ is given by
\begin{equation*}
	(-\Delta)^{-1}\omega(x)=\int_{\mathbb{R}^2}G(x- y)\omega(y)dy.
\end{equation*}
Here $G(x,y)=\frac{1}{2\pi}\ln \frac{1}{|x-y|}-H(x,y)$ is the Green function of $-\Delta$ in $\Omega$. We denote $\psi=(-\Delta)^{-1}\omega$ to be the stream function, then the velocity field can be derived by $\mathbf{u}=\nabla^\perp \psi$.

In the last century, the two-dimensional Euler equation has been intensively studied and the global well-posedness of the vorticity equation with initial data in $L^1\cap L^\infty$ was proved by Yudovich in the classical paper \cite{Yud}. However, many physical phenomena possess strong and irregular fluctuations, such as fluids with small viscosity, where flows tend to separate from rigid walls and sharp corners \cite{Bir, MB}. To model this phenomenon mathematically, the most natural way is to think of a solution to the Euler equation, in which the velocity changes sign discontinuously across a stream line. This discontinuity induces vorticity concentrated on a curve, which is only a measure rather than a bounded function.

A  velocity discontinuity in an inviscid 2D flow is called a vortex sheet, whose vorticity concentrates as a measure along a curve. Suppose that $\omega$ is a weak solution to the Euler equation concentrated on a finite number of closed curves $\Gamma_i$ parameterized by $z_i(\theta)$.  This is, for any test function $\phi\in C_c^\infty(\Omega)$, $\omega$ is a measure such that
$$\int_\Omega \phi(x)d\omega(x)=\sum_i \int  \gamma_i(\alpha)\phi(z_i(\alpha))|z_i'(\alpha)|d\alpha,$$
where $\gamma_i(\alpha)$ is the vorticity strength at $z_i(\alpha)$. Then the equation of the sheet can be derived by the Birkhoff-Rott operator in a domain \cite{Bir2, GPSY1, LNS, MB, Rot}
\begin{equation}\label{1-3}
	BR(z,\gamma)(x):=\frac{1}{2\pi}P.V. \int \frac{(x-z(\alpha))^\perp}{|x-z(\alpha)|^2} \gamma(\alpha)|z'(\alpha)| d\alpha+\int \nabla^\perp H(x,z(\alpha))\gamma(\alpha)|z'(\alpha)| d\alpha,
\end{equation}
where $P.V.$ stands for principal value integral. \eqref{1-3} yields the motion of the sheet
\begin{equation}\label{1-4}
	\mathbf{u}(z_i(\theta))=-BR(z_i(\theta))
\end{equation}
with $BR(z_i(\theta)):=-\sum_j BR(z_j,\gamma_j)(z_i(\theta))$.

Significant efforts have been made in mathematical study of the theory of vortex sheet. In the elegant paper \cite{Del}, Delort proved global existence of weak solutions with an initial $L^2_{\text{loc}}$ velocity and a positive measure vorticity. Later, the proof was simplified by Majda \cite{Maj}.  Duchon and Robert\cite{DR}
 established global existence for a class of initial data concentrated closed to a line. Existence in different setting of vortex sheet with a distinguished sign was also  obtained in \cite{EM, Sch}. For vorticity without a definite sign, only partial results on the existence are known under some additional assumptions \cite{LNX, SSBF, W}. Note that uniqueness for such solutions still remains open.

The operator \eqref{1-3} is so singular that blow up may occur in the motion of vortex sheet. Indeed, singular formulation was conjectured by Birkhoff \cite{Bir2}, and by Birkhoff and Fisher \cite{BF}. In \cite{Moo}, Moore showed the possibility that the curvature blows up in finite time even though the initial data is analytic. Moore's  result was also supported by numerical study \cite{Kra}. Ill-posedness for vortex sheet problem in the space $H^s$ with $s>\frac{3}{2}$ was obtained by Caflisch and Orellana \cite{CO}. These results demonstrate that the study of vortex sheet is extremely delicate, and hence exact solutions, in particular relative equilibria, are of great importance since their structures persist for long time.

Nevertheless, very few relative equilibria are known.  For the vortex sheets in $\mathbb{R}^2$, except for circles and lines, the only nontrivial examples include : uniformly rotating segment \cite{Bat},
in which the vorticity is supported on a segment of length $2a$ with density
\begin{equation*}
	\gamma(x)=\Omega\sqrt{a^2-x^2}, \ \ \ \ \ \ \text{for} \ \ x\in[-a,a]
\end{equation*}
and angular velocity $\Omega$. A generalization of the rotating segment is the Protas-Sakajo class \cite{PS}, which is  made out of segments rotating about a common center of rotation with endpoints at the vertices of a regular polygon. Recently, a new class of  vortex sheet was obtained in  \cite{GPSY2} via degenerate  bifurcation from rotating circles. Note that the existence of nontrivial steady vortex sheet in $\mathbb R^2$ is not apparent in view of the rigidity results obtained in \cite{GPSY1}, where the authors showed for uniformly rotating vortex sheets with angular velocity $\Omega\leq 0$ and strength $\gamma>0$, only trivial solutions exist.

\emph{In a domain $\Omega\subsetneq \mathbb{R}^2$,  there seems no stationary vortex sheet is known so far}.

The purpose of the present  paper is to construct a family of stationary vortex sheets for a domain (bounded or not), whenever the Kirchhoff-Routh function possesses nondegenerate critical points. This is the first result on this issue.

For any given integer $m>0$, and $m$ real numbers $\kappa_1,\kappa_2,\ldots,\kappa_m$, define the  Kirchhoff-Routh function on $\Omega^m =\{\mathbf{x}=(x_1,x_2,..,x_m)\,\mid\, x_i\in\Omega,\text{for}\,i=1,\ldots,m\}$
as follows
\begin{equation}\label{1-5}
	\mathcal{W}_m(x_1,x_2,..,x_m)=-\sum\limits_{i\neq j}^m\kappa_i\kappa_j G(x_i,x_j)+\sum\limits_{i=1}^k\kappa_i^2H(x_i,x_i).
\end{equation}
It is known that the location of $m$-point vortices with strength $\kappa_i$ ($i=1,\ldots,m$) in $\Omega$ must be a critical point of $\mathcal{W}_m$, see e.g. \cite{Lin1, Lin2}. Results on the existence and non-degeneracy of critical points for W can be found in \cite{BP, BPW}. In \cite{GT}, it
was proved that if $\Omega$ is convex then there is no critical point of $\mathcal{W}_m$ in $\Omega ^m$ with $m\geq 2$ provided $\kappa_i>0$ for all $i = 1,\ldots, m$. Let us point out that although the non-degeneracy of critical points for the
Kirchhoff-Routh functions in a general domain is not an easy issue, it is true for most of the domains, as
proved in \cite{Bar, MP}. On the other hand, in \cite{CF}, it shows that in a convex domain, $\mathcal{W}_1$ has a unique critical
point, which is also non-degenerate. In a recent paper \cite{CYY}, the first author, Yan and Yu obtained some existence and nondegeneracy results on critical points of the Kirchhoff-Routh function on unbounded domains.

Giving point $\mathbf{x}_0=(x_{0,1},\ldots,x_{0,m})\in \Omega^m$ that is a nondegenerate critical point of $\mathcal{W}_m$, for $\varepsilon$ small, we will construct  a branch of vortex sheets concentrated on a finite number of closed curves $\Gamma_i$. Moreover, each $\Gamma_i$   is the perturbation of a small circle with radius $\varepsilon$ centered at some point $x_{\varepsilon,\tau, i}\in \Omega$ close to $x_{0,i}$. The vorticity of $\omega$ satisfies $\int_0^{2\pi} \gamma_i(\alpha)|z_i'(\alpha)| d\alpha \to \kappa_i\not=0$. This result shows the rich diversity of stationary vortex sheet solutions despite that the well-posedness is not fully 
understood.

Our main theorem is  as follows.
\begin{theorem}\label{thm1}
	Let $\Omega\subset \mathbb{R}^2$ be a domain (may be unbounded), $\kappa_i\not=0$ ($i=1,\ldots, m)$ be $m$ given number. Suppose that $\mathbf{x}_0=(x_{0,1},\ldots,x_{0,m})\in \Omega^m$ with $x_{0,i}\not=x_{0,j}$ for $i\not=j$ is an isolated critical point of $\mathcal{W}_m$ defined by \eqref{1-5} satisfying the nondegeneracy condition: $\text{deg} \left(\nabla \mathcal{W}_m, \mathbf{x}_0\right)\not=0$. Then, there are $\varepsilon_0>0$ and $\tau_0>0$, such that for all $0<\varepsilon<\varepsilon_0$ and $-\tau_0<\tau<\tau_0$, there exists a stationary vortex sheet $\omega_{\varepsilon,\tau}$ possessing the following properties:
	\begin{itemize}
		\item[(i)] $\omega_{\varepsilon,\tau}=\sum\limits_{i=1}^m\gamma_i \delta_{\Gamma_i}$ concentrates on a finite number of closed curves $\Gamma_i$ with strength $\gamma_i$. Moreover, it holds that $\gamma_i=\frac{\kappa_i+O(\varepsilon)}{2\pi\varepsilon}$ and each $\Gamma_i$ is a perturbation of a small circle with radius $\varepsilon$ and centered at some point $x_{\varepsilon,\tau, i}\in \Omega$ satisfying $|x_{\varepsilon,\tau,i}-x_{0,i}|=O(\varepsilon)$.
		\item[(ii)]
		As $\varepsilon\to 0^+$ and $\tau\to 0$, one has in the sense of measure
		\begin{equation*}
			\omega_{\varepsilon,\tau}\to \sum\limits_{i=1}^{m}\kappa_i\delta(x-x_{0,i})\,\,\,\text{weakly},
		\end{equation*}
where, $\delta(x-x_{0,i})$ is the Dirac delta function concentrated at the point $x_{0,i}$.
		\item[(iii)]
		For any $i=1,\ldots,m$, the interior of $\Gamma_i$ is convex.
	\end{itemize}
\end{theorem}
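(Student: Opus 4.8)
The plan is to reformulate the steady vortex sheet equations \eqref{1-4} as a functional equation $\mathcal{F}(\varepsilon,\tau,z,\gamma)=0$ to which the implicit function theorem applies, after a careful rescaling that desingularizes the problem as $\varepsilon\to 0$. First I would parameterize the $i$-th curve as a graph over a circle of radius $\varepsilon$ centered at an unknown point $a_i\in\Omega$, writing
$$z_i(\alpha)=a_i+\varepsilon\bigl(1+\varepsilon f_i(\alpha)\bigr)\bigl(\cos\alpha,\sin\alpha\bigr),\qquad \alpha\in[0,2\pi),$$
with $f_i$ in a space of mean-zero, even, $2\pi$-periodic Hölder (or Sobolev) functions chosen so that the linearized operator is an isomorphism; the strength is written $\gamma_i(\alpha)=\frac{\kappa_i}{2\pi\varepsilon}\bigl(1+g_i(\alpha)\bigr)$ with $g_i$ in a complementary space. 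The tangency condition \eqref{1-4}, that $\mathbf{u}(z_i(\theta))+BR(z_i(\theta))$ has no component normal to $\Gamma_i$ — together with the requirement that the tangential part be consistent so that the curve is genuinely stationary (here one uses that for a steady sheet the full velocity on both sides must be tangent, which after subtracting the average reduces to a scalar equation per curve) — becomes the main equation; the parameter $\tau$ enters as an extra degree of freedom (e.g. a normalization of the total circulation or a rotation parameter) that we keep free.

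The key structural point is the expansion of the Birkhoff–Rott operator in $\varepsilon$. The principal-value integral over $\Gamma_i$ against the singular kernel $\frac{(x-z_i(\alpha))^\perp}{|x-z_i(\alpha)|^2}$ is, at leading order, the self-induced velocity of a circle, which is purely tangential and contributes nothing to the normal equation; the next order produces a linear operator in $f_i$ of the form (a multiple of) the operator $f\mapsto \mathcal{H}f'$ or $f\mapsto \sum_k |k|\hat f_k e^{ik\alpha}$ — a first-order, elliptic-type Fourier multiplier — whose invertibility on mean-zero spaces is exactly why the function spaces must be chosen with a derivative gain. The regular part $\nabla^\perp H$ together with the cross terms $BR(z_j,\gamma_j)(z_i(\theta))$ for $j\neq i$ contribute, after dividing by the natural scale, the gradient of the Kirchhoff–Routh function evaluated at $(a_1,\dots,a_m)$, plus $O(\varepsilon)$ corrections. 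Thus, setting $a_i=x_{0,i}+\varepsilon b_i$, the finite-dimensional part of $\mathcal{F}$ reduces at $\varepsilon=0$ to $\nabla\mathcal{W}_m(\mathbf{x}_0)=0$, and its linearization to $D^2\mathcal{W}_m(\mathbf{x}_0)$, which the nondegeneracy hypothesis $\deg(\nabla\mathcal{W}_m,\mathbf{x}_0)\neq 0$ handles — strictly speaking, when the Hessian is only known through a nonzero degree rather than being invertible, one applies a degree-theoretic / Lyapunov–Schmidt variant rather than the plain IFT on that block, while the infinite-dimensional block (the $f_i,g_i$) is solved by the genuine IFT with the multiplier operator above.

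Carrying this out: (1) set up the spaces $X=\prod_i(C^{1,\beta}_{e,0}\times C^{\beta}_{e,0})\times\mathbb{R}^{2m}$ and target $Y$, check $\mathcal{F}$ is well-defined and $C^1$ — this requires the mapping properties of the Birkhoff–Rott operator between Hölder spaces, including that the principal value is a bounded operator with the claimed derivative structure; (2) compute $\mathcal{F}(0,0,0,0)=0$ using that a circle with uniform strength is a stationary sheet in $\mathbb{R}^2$ modulo the background $H$ and the other vortices, whose effect is exactly $\nabla\mathcal{W}_m(\mathbf{x}_0)=0$; (3) compute the partial linearization $\partial_{(z,\gamma,a)}\mathcal{F}(0,0,0,0,\mathbf{x}_0)$ and show it is an isomorphism (resp. has nonzero degree on the $a$-block), using the Fourier-multiplier structure on the $f$-block, a triangular/invertible structure on the $g$-block coming from the constraint fixing the circulations, and $D^2\mathcal{W}_m$ (or its degree) on the $a$-block; (4) apply the IFT / degree argument to get $(z,\gamma,a)=(z(\varepsilon,\tau),\gamma(\varepsilon,\tau),a(\varepsilon,\tau))$ for $|\varepsilon|,|\tau|$ small, with the stated asymptotics $\gamma_i=\frac{\kappa_i+O(\varepsilon)}{2\pi\varepsilon}$ and $|a_i-x_{0,i}|=O(\varepsilon)$; (5) read off (ii) from the construction by testing $\omega_{\varepsilon,\tau}$ against $\phi\in C_c^\infty$ and sending $\varepsilon,\tau\to 0$; and (6) obtain (iii) by noting $\Gamma_i$ is an $O(\varepsilon^2)$-perturbation in $C^{1,\beta}$ (indeed $C^2$ after elliptic regularity on the multiplier equation) of a circle, so its curvature stays positive. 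The main obstacle is step (1)–(3): pinning down the precise mapping properties and the exact leading-order linearization of the singular Birkhoff–Rott operator in the chosen Hölder scale — in particular verifying that the principal-value term contributes the invertible multiplier $f\mapsto c\,\mathcal{H}[f']$ (with the right constant $c\neq 0$) and that all remainders are genuinely $O(\varepsilon)$ in operator norm uniformly down to $\varepsilon=0$ — since everything else is then a standard, if lengthy, implicit-function/degree argument.
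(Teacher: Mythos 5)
Your overall strategy is the same as the paper's (desingularized contour-type formulation of the Birkhoff--Rott conditions, expansion in $\varepsilon$ producing the Kirchhoff--Routh gradient from the regular and cross terms, an implicit-function argument for the shape/strength unknowns combined with a degree argument for the centers), but there is a genuine gap at your step (3), which you yourself flag as the main obstacle. The linearized operator at $\varepsilon=0$ on the $(f_i,g_i)$-block is \emph{not} an isomorphism on mean-zero spaces, nor is it a triangular system with an invertible multiplier $f\mapsto c\,\mathcal H[f']$ on the $f$-block: in each Fourier mode $j$ the $f$- and $g$-components are coupled through a $2\times2$ matrix whose determinant is proportional to $j-1$, so the first mode is degenerate. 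Concretely, the kernel is the $2m$-dimensional space spanned by $\bigl(a\cos\theta+b\sin\theta,\ \kappa_i(a\cos\theta+b\sin\theta)\bigr)$ for each $i$, reflecting the redundancy between the first Fourier modes of $f_i$ and your center unknowns $a_i$; and there is a matching $2m$-dimensional cokernel. Your "mean-zero, even" restriction does not remove this degeneracy (the $\cos\theta$ modes survive), and evenness is in any case not a consistent restriction in a general domain without symmetry, since the nonlinear residual has odd components at all modes. Moreover the counting does not close as you describe: the first-mode obstruction consists of $4m$ scalar conditions per configuration (coefficients of $\cos\theta,\sin\theta$ in both the normal and the tangential equation for each curve), while the centers supply only $2m$ parameters; one must show that the remaining $2m$ conditions are attainable through the rank-one first-mode blocks, which forces the specific coupled constraints between the first modes of $f_i$ and $g_i$ used to define the domain and range spaces in the paper ($X_i^k$, $Y_i^k$). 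Only after restricting to these constrained spaces does the linearization become an isomorphism, and the requirement that the nonlinear map actually take values in the constrained range is exactly the compatibility condition $\nabla\mathcal W_m(\mathbf x)=O(\varepsilon)$, solved by the degree hypothesis. Without identifying this kernel/cokernel structure, the clean split "IFT on the infinite-dimensional block, degree on the $a$-block" does not go through.

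A second, smaller gap concerns the parameter $\tau$ in the statement. Your suggestions (a circulation normalization or a rotation parameter) do not produce the asserted two-parameter family: the $\kappa_i$ are fixed data, and rotating a non-symmetric perturbed curve in a general domain is not a solution. In the paper, $\tau$ is a shift of $(\bm f,\bm g)$ along a fixed nontrivial element of the $2m$-dimensional kernel described above, after which the implicit function theorem is applied with $(\varepsilon,\tau)$ as parameters; this is what yields genuinely distinct solutions $\omega_{\varepsilon,\tau}$ for each small $\varepsilon$. Your steps (5) and (6) (weak convergence to the point vortices and convexity via the curvature of an $O(\varepsilon)$-perturbed circle) are fine and agree with the paper.
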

\begin{remark}\label{rem1}
	Our result does not rely on the sign of $\kappa_i$, which is essential in the global existence of the initial problem as mentioned above.
\end{remark}
\begin{remark}\label{rem2}
	For simplicity, all the scales of $\Gamma_i$ ($i=1,\ldots,m$) are chosen to be of the same order. But this is not necessary and one may construct vortex sheet concentrated on $\Gamma_i$ with the  scale of $\varepsilon_i$ ($i=1,\ldots,m$) different from each other.
\end{remark}

Fixing $\tau\in (-\tau_0, \tau_0)$, say $\tau=0$, we obtain a family of vortex sheet solutions parameterized by $\varepsilon$, which is of special interest.
\begin{corollary}\label{thm2}
	Let $\Omega\subset \mathbb{R}^2$ be a domain (may be unbounded), $\kappa_i\not=0$ ($i=1,\ldots, m)$ be $m$ given numbers. Suppose that $\mathbf{x}_0=(x_{0,1},\ldots,x_{0,m})\in \Omega^m$ with $x_{0,i}\not=x_{0,j}$ for $i\not=j$  is an isolated critical point of $\mathcal{W}_m$ defined by \eqref{1-5} satisfying the nondegeneracy condition: $\text{deg} \left(\nabla \mathcal{W}_m, \mathbf{x}_0\right)\not=0$. Then, there is an $\varepsilon_0>0$, such that for all $0<\varepsilon<\varepsilon_0$, there exists a stationary vortex sheet $\omega_\varepsilon$ possessing the following properties:
	\begin{itemize}
		\item[(i)] $\omega_\varepsilon=\sum\limits_{i=1}^m\gamma_i \delta_{\Gamma_i}$ concentrates on a finite number of closed curves $\Gamma_i$ with strength $\gamma_i$. Moreover, it holds that $\gamma_i=\frac{\kappa_i+O(\varepsilon)}{2\pi\varepsilon}$ and each $\Gamma_i$ is a perturbation of a small circle with radius $\varepsilon$ and centered at some point $x_{i,\varepsilon}\in \Omega$ satisfying $|x_{\varepsilon,i}-x_{0,i}|=O(\varepsilon)$.
		\item[(ii)]
		As $\varepsilon\to 0^+$, one has in the sense of measure
		\begin{equation*}
			\omega_{\varepsilon}\to \sum\limits_{i=1}^{m}\kappa_i\delta(x-x_{0,i})\,\,\, \text{wealy},
		\end{equation*}
where, $\delta(x-x_{0,i})$ is the Dirac delta function concentrated at the point $x_{0,i}$.
		\item[(iii)]
		For any $i=1,\ldots,m$, the interior of $\Gamma_i$ is convex.
	\end{itemize}
\end{corollary}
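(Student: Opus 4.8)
Corollary~\ref{thm2} is the special case $\tau=0$ of Theorem~\ref{thm1}: once Theorem~\ref{thm1} is proved, fixing $\tau=0$ and writing $x_{\varepsilon,i}:=x_{\varepsilon,0,i}$ reproduces conclusions (i)--(iii) verbatim. So the real task is Theorem~\ref{thm1}, and I sketch the plan for it. A vortex sheet $\omega=\sum_i\gamma_i\delta_{\Gamma_i}$ is stationary exactly when, on each $\Gamma_i$, the Birkhoff--Rott velocity is tangent to the curve, i.e. $BR(z_i(\theta))\cdot n_i(\theta)=0$ for all $\theta$, where $n_i$ is a unit normal; equivalently the stream function is constant on each $\Gamma_i$. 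I would look for $\Gamma_i$ as a graph over a small circle, $z_i(\theta)=x_i+\varepsilon(1+f_i(\theta))(\cos\theta,\sin\theta)$, take $\gamma_i$ constant with total circulation $\kappa_i$ (so that $\gamma_i=\frac{\kappa_i}{2\pi\varepsilon}(1+O(\|f_i\|))=\frac{\kappa_i+O(\varepsilon)}{2\pi\varepsilon}$ automatically), and use the translational freedom to impose that $f_i$ has vanishing zeroth and first Fourier modes.

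After the dilation $x=x_i+\varepsilon y$, the singular (principal-value) self-interaction of the $\varepsilon$-circle with constant density is, by symmetry, already tangent to it, so it contributes nothing to $BR\cdot n$ on the exact circle; the contributions of the other sheets $\Gamma_j$ ($j\ne i$) together with the regular part $\int\nabla^\perp H(x,\cdot)\gamma_j|z_j'|$ produce, to leading order in $\varepsilon$, the smooth external velocity at $x_i$, which vanishes (for all $i$) precisely when $\mathbf x$ is a critical point of $\mathcal W_m$. On the $\varepsilon$-circle this external velocity has a nonzero normal component only in the first Fourier mode; so, after the normalization above, the mode-$(\pm1)$ part of the equation forces, to leading order, $\nabla\mathcal W_m(\mathbf x)=0$, while the remaining modes receive only an $O(\varepsilon)$ forcing.

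The equation on each $\Gamma_i$ is then solved by a Lyapunov--Schmidt decomposition in Fourier modes, in two parts. On the modes $|k|\ge2$ the linearization at $\varepsilon=0$, $\mathbf x=\mathbf x_0$, $f\equiv0$ is the self-induced operator of the circle, and one checks (a diagonal Fourier computation) that it is invertible there; the implicit function theorem then gives the shapes $f_i=f_i(\varepsilon,\mathbf x)$, smooth in their arguments and $O(\varepsilon)$-small. The mode-$0$ part of the equation is automatically satisfied — the flux of the (divergence-free) velocity, hence of $BR$, through the closed curve $\Gamma_i$ vanishes — which leaves the overall scale of each profile undetermined; pinning it to $\varepsilon$ up to a common small factor is the extra parameter $\tau$ of Theorem~\ref{thm1}, and one simply takes $\tau=0$ for the Corollary. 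Substituting $f_i(\varepsilon,\mathbf x)$ back, the mode-$(\pm1)$ part reduces to a finite-dimensional equation for $\mathbf x\in\Omega^m$ of the form $\nabla\mathcal W_m(\mathbf x)+O(\varepsilon)=0$; since $\deg(\nabla\mathcal W_m,\mathbf x_0)\ne0$, homotopy invariance of the Brouwer degree yields, for all small $\varepsilon$, a solution $\mathbf x=x_\varepsilon$ with $|x_{\varepsilon,i}-x_{0,i}|=O(\varepsilon)$. Conclusion (i) is then built in; (ii) follows because mass $\kappa_i+O(\varepsilon)$ is carried by a curve shrinking to $x_{0,i}$; and (iii) holds because $f_i=O(\varepsilon)$ in a norm controlling at least two derivatives, so each $\Gamma_i$ has positive curvature and convex interior.

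The main obstacle — the ``suitable choice of function spaces'' of the abstract — is the singularity of the operator \eqref{1-3}: with $x=z_i(\theta)$, its linearization in the shape $f_i$ is a pseudodifferential operator of order one, i.e. it loses one derivative (as in the classical analysis of the sheet evolution), so the implicit function theorem cannot be applied in a fixed Sobolev or H\"older space. I would therefore work in a scale of Banach spaces of analytic $2\pi$-periodic functions, characterized by exponential decay of Fourier coefficients, and use a Cauchy--Kovalevskaya-type implicit function theorem (in the spirit of Nirenberg and Nishida) that tolerates a shrinkage of the analyticity radius; the invertibility of the linearized circle operator on the non-symmetry modes then furnishes the needed estimates with a fixed, controlled loss. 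A secondary technical burden is to make the $\varepsilon$-expansion of the mutual-interaction terms and of $\nabla^\perp H(x,\cdot)$ uniform — in particular when $\Omega$ is unbounded — so that the reduced map is a genuine $C^1$ (hence degree-admissible) perturbation of $\nabla\mathcal W_m$.
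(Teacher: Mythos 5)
Your first sentence is exactly the paper's proof of the Corollary: it follows from Theorem \ref{thm1} by fixing $\tau=0$. The gap lies in your sketch of Theorem \ref{thm1} itself, at the very first step. Stationarity of a vortex sheet is \emph{not} equivalent to tangency of the Birkhoff--Rott velocity (equivalently, the stream function being constant on each $\Gamma_i$). For a sheet the density along the curve is an additional unknown, and the weak formulation of $\mathbf{u}\cdot\nabla\omega=0$ yields \emph{two} pointwise conditions (Lemma 2.1 of \cite{GPSY1}, the paper's \eqref{2-1}--\eqref{2-2}): $BR\cdot\mathbf{n}=0$ \emph{and} $BR\cdot\mathbf{s}\,\gamma_i/|z_i'|\equiv\text{const}$ on each $\Gamma_i$. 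By prescribing $\gamma_i$ to be constant and imposing only tangency, you discard the second equation; on a perturbed, non-circular curve with constant density the quantity $BR\cdot\mathbf{s}\,\gamma_i/|z_i'|$ is in general not constant, so the measure you construct would not be a stationary weak solution. This is precisely where sheets differ from patches, and it is why the paper carries two unknown functions $(f_i,g_i)$ per curve and solves the coupled system $\mathcal{F}_{i,1}=0$, $\mathcal{F}_{i,2}=0$; correspondingly, the kernel of the linearization is $2m$-dimensional, spanned by the pairs $(a\cos\theta+b\sin\theta,\ \kappa_i(a\cos\theta+b\sin\theta))$, and the parameter $\tau$ in Theorem \ref{thm1} moves along this kernel --- it is not a normalization of the mean or overall scale of the profile, as your sketch suggests. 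Your Lyapunov--Schmidt/degree reduction in the first Fourier modes is in the right spirit, but it must be run for the coupled $(f,g)$-system, not for the tangency equation alone.

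A secondary point: the analytic-scale, Cauchy--Kovalevskaya-type implicit function theorem you invoke is unnecessary. The linearization in the shape does lose one derivative (at the trivial solution it is essentially $f\mapsto\frac{\kappa_i}{2}f'$ together with a $(-\Delta)^{1/2}$ term), but the paper absorbs this fixed loss by an asymmetric choice of spaces: the functionals are $C^1$ from $X^{k+1}\times X^k$ into $X^k\times X^k$, the linearization at $(0,\mathbf{x}_0,0,0)$ is block-diagonal in Fourier modes with invertible $2\times2$ blocks $M_j,N_j$ for $j\ge2$ (Lemma \ref{l4-2}), hence an isomorphism onto the constrained subspaces $Y_i^k$, and the missing first-harmonic range directions are recovered by adjusting $\mathbf{x}$ so that $\nabla\mathcal{W}_m(\mathbf{x})=O(\varepsilon)$ (Proposition \ref{p5-1}), which the degree hypothesis makes solvable. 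So an ordinary implicit function theorem in fixed Sobolev spaces suffices, and it also delivers the $O(\varepsilon)$ control of $f_i$ in $H^{k+1}$ that your convexity argument (which otherwise matches the paper's curvature computation) requires.
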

The result of Corollary \ref{thm2} is closely related to the classical problem: regularization of point vortices
for the Euler equation, which means justifying the weak formulation for
point vortex solutions of the incompressible Euler equations by approximating
these solutions with more regular solutions. In fact, the  vortex sheets obtained in Corollary \ref{thm2} belong to the space $H^{-1}(\mathbb R^2)$, while the point vortices solution belongs to $H^{-1-\sigma}(\mathbb R^2)$ for any $\sigma>0$, which is more singular than a vortex sheet. Thus,  our result can be regarded as a desingularization of point vortices in some way. For more literature on  desingularization  problem, we refer to \cite{Cao3, CWZ, Dav, HM} and references therein.

Next we shall sketch the basic ideas used to prove the main result. Thanks to Lemma 2.1 in\ \cite{GPSY1},  we are able to formulate the conditions that the Birkhoff-Rott integral \eqref{1-3} satisfies for a stationary vortex sheet as a system of  $2m$ coupled integro-differential  equations $\mathcal{F}_{i,1}(\varepsilon, \mathbf{x}, \bm{f}, \bm{g})=0$ and $\mathcal{F}_{i,2}(\varepsilon, \mathbf{x}, \bm{f}, \bm{g})=0$, $i=1,\ldots,m$. We expect that the case $(\varepsilon, \mathbf{x},\bm{f}, \bm{g})=(0,\mathbf{x}_0,0,0)$ corresponds to the point vortices and hence $(0, \mathbf{x}_0, 0, 0)$ is a solution to $\mathcal{F}_{i,1}=0$ and $\mathcal{F}_{i,2}=0$ provided that $\mathbf{x}_0$ is a critical point of $\mathcal{W}_m$. Therefore the  first step is to extend $\mathcal{F}_{i,1}$ and  $\mathcal{F}_{i,2}$ such that $\varepsilon\leq 0$ is allowed. Then one can verify  that $\mathcal{F}_{i,1} (0, \mathbf{x}_0, 0, 0)=\mathcal{F}_{i,2}(0, \mathbf{x}_0, 0, 0)=0$ does hold when $ \mathbf{x}_0$ is a critical point of $\mathcal{W}_m$, and hence we obtain a trivial solution. To apply the implicit function theorem at the solution $(0, \mathbf{x}, 0, 0)$, the Gateaux derivative of $\bm{\mathcal {F}}:=(\mathcal{F}_{1,1}, \mathcal{F}_{1,2}, \ldots \mathcal{F}_{m,1}, \mathcal{F}_{m,2})$ should be an isomorphism, which unfortunately fails. Detailed calculations show that $D\bm{\mathcal F}$ has a $2m$ dimensional kernel $\prod_{i=1}^m \text{span}\{(a_1\cos(\theta)+b_1\sin(\theta), \kappa_i(a_1\cos(\theta)+b_1\sin(\theta)))\}$. Hence, we have to consider the equations in quotient spaces and impose the conditions $-\kappa_i \int \mathcal{F}_{i,1} \sin(\theta)d\theta=\int \mathcal{F}_{i,2} \cos(\theta)d\theta$ and $\kappa_i \int \mathcal{F}_{i,1}\cos(\theta)d\theta=\int \mathcal{F}_{i,2} \sin(\theta)d\theta$ for all $i=1,\ldots,m$. Although these conditions seem to be complicated, we successfully  convert  them into a concise equation $\nabla \mathcal{ W}_m(\mathbf{x})=O(\varepsilon)$, which is solvable near $\mathbf{x}_0$ due to the nondegeneracy of $\nabla \mathcal{W}_m$ at $\mathbf{x}_0$. Finally, we can apply the implicit function theorem to obtain the existence.  The convexity of the interior of $\Gamma_i$ follows  from calculating the curvature directly. We point out that our procedure of proving Theorem \ref{thm1} borrows the idea of  Lyapunov-Schmidt reduction and  local bifurcation theory.

The  ideas and methods introduced in the present paper may be widely applied to a variety of situations and other models. For example, one may consider an ideal fluid with an irrotational background flow $u_0=\nabla^\perp \psi_0$, where $\psi_0$ is a given harmonic function. In this case, the Kirchhoff-Routh function is given by (see \cite{CYY})
\begin{equation*}
	\mathcal{W}_{m, \psi_0}(x_1,x_2,..,x_m)=-\sum\limits_{i\neq j}^m\kappa_i\kappa_j G(x_i,x_j)+\sum\limits_{i=1}^k\kappa_i^2H(x_i,x_i)+2\sum\limits_{i=1}^k\kappa_i \psi_0(x_i).
\end{equation*}
Although $\mathcal{W}_{m, \psi_0}$ is slightly different from $\mathcal{W}_{m}$ given by \eqref{1-5}, 
we believe that  our method can be modified to construct vortex sheets near critical points of $\mathcal{W}_{m, \psi_0}$ in this situation.  In a recent work, the authors modified the method of this paper and contructed water waves of both finite depth and infinite depth with vortex sheets \cite{CQZ}.

We would like to make a brief remark on the approach of constructing vortex patches via the contour dynamics equation, which shares a similar spirit as the construction of vortex sheet  we consider. Many celebrated contributions have been made with the contour dynamics equation method in recent years, see e.g. \cite{Cas1, Cas2, Cas3, Cas4, de3,de2, Has3, HM,Hmi,Hmi2} and references therein. However, since a vortex patch is actually a bounded function, the contour dynamics equation is more regular than the equations of a vortex sheet. Hence, more attentions are needed to pay in the process of our proof.

This paper is organized as follows. In section 2, we derive the  equations that the Birkhoff-Rott integral satisfies for a stationary vortex sheet and define the function spaces which will be used later. In Section 3, we extend the functionals and show their $C^1$ regularity.  Section 4 is devoted to study the linearization operators, where we prove that the derivative is an isomorphism in  quotient spaces. In Section 5, we adjust $\mathbf{x}_{\varepsilon}$ such that the range of our functional belongs to the quotient spaces and apply the implicit function theorem to prove Theorem \ref{thm1}.

\section{Formulation and functional setting}\label{2}
Since $\omega$ is a stationary sheet, using Lemma 2.1 in \cite{GPSY1}, we derive the following equations that the BR equation \eqref{1-4} and  $\gamma_i$ satisfy.
\begin{equation}\label{2-1}
	BR(z_i(\theta))\cdot \mathbf{n}(z_i(\theta))=0,
\end{equation}
where $\mathbf{n}(z_i(\theta))$ is the unite normal vector of $\Gamma_i$ at $z_i(\theta)$,
and
\begin{equation}\label{2-2}
	BR(z_i(\theta))\cdot \mathbf{s}(z_i(\theta))\frac{\gamma_i(\theta)}{|z_i'(\theta)|}=C,
\end{equation}
where $\mathbf{s}(z_i(\theta))$ is the unite tangential vector.
Note that \eqref{2-2} can be rewritten as
\begin{equation}\label{2-3}
	(I-P_0)\left[BR(z_i(\theta))\cdot \mathbf{s}(z_i(\theta))\frac{\gamma_i(\theta)}{|z_i'(\theta)|}\right]=0,
\end{equation}
where $P_0$ is the projection to the mean defined by $P_0 f:=\frac{1}{2\pi}\int_0^{2\pi} f(\theta)d\theta$.  Since $\mathbf{x}_0\in \Omega^m$, we can take $r_0>0$ sufficiently small such that $B_{r_0}(x_{0,i})\subset \Omega$ for all $i=1,\ldots,m$, where $B_{r_0}(x_{0,i})$ is the ball with radius $r_0$ and centered at $x_{0,i}$. We aim to construct vortex sheets localized near $\mathbf{x}_0$. Thus, for $\varepsilon>0$ small,
we assume that $z_i$($i=1,\ldots, m$) are of the following form
$$z_i(\theta)=x_{i}+\varepsilon R_i(\theta)(\cos\theta, \sin\theta)$$
with $R_i(\theta)=1+\varepsilon f_i(\theta),$ and  $x_i\in B_{r_0}(x_{0,i})$  to be chosen later.
We also assume $$\gamma_i(\alpha)=\frac{\kappa_i+\varepsilon g_i(\alpha)}{2\pi |z'_i(\alpha)|}.$$

We end this section by introducing some notations and definitions that will be used in this paper and reformulating equations \eqref{2-1} and \eqref{2-3}. Denote the mean value of integral of $g$ on the unit circle by
$$\int\!\!\!\!\!\!\!\!\!\; {}-{} g(\tau)d\tau:=\frac{1}{2\pi}\int_0^{2\pi}g(\tau)d\tau$$
and  set
\begin{equation*}
	\begin{split}
		&A(\theta, \alpha):=4\sin^2\left(\frac{\theta-\alpha}{2}\right),\,\,\,A_{ij}=|x_{i}-x_{j}|^2,\\
		&B(f,\theta, \alpha):=4(f(\theta)+f(\alpha))\sin^2\left(\frac{\theta-\alpha}{2}\right)+\varepsilon\left((f(\theta)-f(\alpha))^2+4 f(\theta)f(\alpha)\sin^2\left(\frac{\theta-\alpha}{2}\right)\right),\\
		&B_{ij}(\theta, \alpha)=2(x_{i}-x_{j})\cdot((\cos\theta, \sin\theta)-(\cos\alpha, \sin\alpha))+2\varepsilon(x_{i}-x_{j})\cdot\left(f_i(\theta)(\cos\theta, \sin\theta)\right.\\
		&\qquad\quad\qquad\left.-f_j(\theta)(\cos\alpha, \sin\alpha)\right)+\varepsilon((1+\varepsilon f_i(\theta))(\cos\theta, \sin\theta)-(1+\varepsilon f_j(\alpha))(\cos\alpha, \sin\alpha))^2.
	\end{split}
\end{equation*}

For $k\geq 3$, we will also frequently use the function spaces given in the following, whose norms are  naturally defined as norms of product spaces.
\begin{equation*}
	\begin{split}
		X^k&=\left\{ g\in H^k\, \mid\,\, g(\theta)= \sum\limits_{j=1}^{\infty}a_j\cos(j\theta)+b_j\sin(j\theta)\right\},\\
		X_i^{k}&:=\Bigg\{(f_1, f_2)\in X^{k+1}\times X^k \,\Big|\,\,\begin{cases} -\kappa_i\int\!\!\!\!\!\!\!\!\!\; {}-{} f_1(\theta)\cos(\theta)d\theta= \int\!\!\!\!\!\!\!\!\!\; {}-{} f_2(\theta)\cos(\theta)d\theta,\\ -\kappa_i\int\!\!\!\!\!\!\!\!\!\; {}-{} f_1(\theta)\sin(\theta)d\theta= \int\!\!\!\!\!\!\!\!\!\; {}-{} f_2(\theta)\sin(\theta)d\theta\end{cases}\Bigg\},\\
		Y_i^{k}&:=\Bigg\{(f_1, f_2)\in X^{k}\times X^k \,\Big|\,\,\begin{cases} -\kappa_i\int\!\!\!\!\!\!\!\!\!\; {}-{} f_1(\theta)\sin(\theta)d\theta= \int\!\!\!\!\!\!\!\!\!\; {}-{} f_2(\theta)\cos(\theta)d\theta,\\ \kappa_i\int\!\!\!\!\!\!\!\!\!\; {}-{} f_1(\theta)\cos(\theta)d\theta= \int\!\!\!\!\!\!\!\!\!\; {}-{} f_2(\theta)\sin(\theta)d\theta\end{cases}\Bigg\},\\
		\mathcal X^k&:=\{(\bm f, \bm g)\, \mid\,\, (f_i, g_i)\in X_i^{k},\,\, i=1,\ldots, m\},\\
\,\,\mathcal Y^k&:=\{(\bm f, \bm g)\, \mid\,\, (f_i, g_i)\in Y_i^{k},\,\, i=1,\ldots, m\}.
	\end{split}	
\end{equation*}

  For given $ \bm f=(f_1,\ldots, f_m)$ and $ \bm g=(g_1,\ldots, g_m)$, denote
$\tilde{g}_{i,\varepsilon}(t)=\kappa_i+\varepsilon g_i(t)$. Then we can reduce equations \eqref{2-1} and \eqref{2-3}  to
\begin{equation}\label{2-4}
	\begin{split}
		0&=\mathcal{F}_{i, 1}( \varepsilon,\mathbf{x}, \bm f, \bm g)\\
		&:=\frac{1}{\varepsilon}P.V.\int\!\!\!\!\!\!\!\!\!\; {}-{} \frac{R_i(\alpha)\sin(\theta-\alpha)}{A(\theta, \alpha)+\varepsilon B(f_i,\theta,\alpha)} \tilde{g}_{i,\varepsilon}(\alpha)d\alpha+\frac{1}{R_i(\theta)}P.V.\int\!\!\!\!\!\!\!\!\!\; {}-{} \frac{ f_i'(\theta)R_i(\alpha)(1-\cos(\theta-\alpha))}{A(\theta, \alpha)+\varepsilon B(f_i,\theta,\alpha)} \tilde{g}_{i,\varepsilon}(\alpha)d\alpha\\
		&\quad+\frac{1}{R_i(\theta)}P.V.\int\!\!\!\!\!\!\!\!\!\; {}-{} \frac{\varepsilon f'_i(\theta)(f_i(\theta)-f_i(\alpha))}{A(\theta, \alpha)+\varepsilon B(f_i,\theta,\alpha)} \tilde{g}_{i,\varepsilon}(\alpha)d\alpha\\
		&\quad+\sum_{j\not=i}\frac{1}{R_i(\theta)}\int\!\!\!\!\!\!\!\!\!\; {}-{} \frac{(x_{i}-x_{j})\cdot \left(R_i(\theta)(-\sin\theta, \cos\theta)+\varepsilon f'_i(\theta) (\cos\theta, \sin\theta)\right) }{A_{ij}+\varepsilon B_{ij}(\theta,\alpha)} \tilde{g}_{j,\varepsilon}(\alpha)d\alpha\\
		&\quad+\sum_{j\not=i}\frac{1}{R_i(\theta)}\int\!\!\!\!\!\!\!\!\!\; {}-{} \frac{\varepsilon^2f'_i(\theta)R_i(\theta)-\varepsilon^2f'_i(\theta)R_j(\alpha)\cos(\theta-\alpha)+ \varepsilon R_i(\theta)R_j(\alpha)\sin(\theta-\alpha)}{A_{ij}+\varepsilon B_{ij}(\theta,\alpha)} \tilde{g}_{i,\varepsilon}(\alpha)d\alpha\\
		&\quad-\sum_{j=1}^m  \frac{2\pi}{R_i(\theta)}\int\!\!\!\!\!\!\!\!\!\; {}-{} \nabla H(z_i(\theta),z_j(\alpha))\cdot \left(R_i(\theta)(-\sin\theta, \cos\theta)+\varepsilon f'_i(\theta)(\cos\theta, \sin\theta)\right)\tilde{g}_{i,\varepsilon}(\alpha) d\alpha\\
		 &=:\mathcal{F}_{i,11}+\mathcal{F}_{i,12}+\mathcal{F}_{i,13}+\mathcal{F}_{i,14}
+\mathcal{F}_{i,15}+\mathcal{F}_{i,16},
	\end{split}
\end{equation}
and
\begin{equation}\label{2-5}
0=\mathcal{F}_{i,2}:=(I-P_0)\tilde{\mathcal{F}}_{i,2},
\end{equation}
where $\tilde{\mathcal{F}}_{i,2}$ is given by
\begin{equation}\label{2-6}
	\begin{split}
		&\quad\tilde{\mathcal{F}}_{i,2}( \varepsilon,\mathbf{x}, \bm f, \bm g) :=\frac{\tilde{g}_{i,\varepsilon}(\theta)}{\varepsilon(R_i(\theta)^2+(R_i'(\theta))^2)}P.V.\int\!\!\!\!\!\!\!\!\!\; {}-{} \frac{\varepsilon f_i'(\theta)R_i(\alpha)\sin(\theta-\alpha)}{A(\theta, \alpha)+\varepsilon B(f_i,\theta,\alpha)} \tilde{g}_{i,\varepsilon}(\alpha)d\alpha\\
		 &+\frac{\tilde{g}_{i,\varepsilon}(\theta)}{\varepsilon(R_i(\theta)^2+(R'_i(\theta))^2)}P.V.\int\!\!\!\!\!\!\!\!\!\; {}-{} \frac{R_i(\theta)R_i(\alpha)(\cos(\theta-\alpha)-1)}{A(\theta, \alpha)+\varepsilon B(f_i,\theta,\alpha)} \tilde{g}_{i,\varepsilon}(\alpha)d\alpha\\
		&+\frac{\tilde{g}_{i,\varepsilon}(\theta)}{R_i(\theta)^2+(R_i'(\theta))^2}P.V.\int\!\!\!\!\!\!\!\!\!\; {}-{} \frac{R_i(\theta)( f_i(\alpha)-f_i(\theta))}{A(\theta, \alpha)+\varepsilon B(f_i,\theta,\alpha)} \tilde{g}_{i,\varepsilon}(\alpha)d\alpha\\
		 &+\sum_{j\not=i}\frac{\tilde{g}_{i,\varepsilon}(\theta)}{R_i(\theta)^2+(R_i'(\theta))^2}\int\!\!\!\!\!\!\!\!\!\; {}-{} \frac{(x_{i}-x_{j})^\perp\cdot \left(R_i(\theta)(-\sin\theta, \cos\theta)+\varepsilon f'_i(\theta) (\cos\theta, \sin\theta)\right)}{A_{ij}+\varepsilon B_{ij}(\theta,\alpha)} \tilde{g}_{j,\varepsilon}(\alpha)d\alpha\\
		 &+\sum_{j\not=i}\frac{\tilde{g}_{i,\varepsilon}(\theta)}{R_i(\theta)^2+(R_i'(\theta))^2}\int\!\!\!\!\!\!\!\!\!\; {}-{} \frac{-\varepsilon R_i^2(\theta)+\varepsilon^2f'_i(\theta)R_j(\alpha)\sin(\theta-\alpha)+ \varepsilon R_i(\theta)R_j(\alpha)\cos(\theta-\alpha)}{A_{ij}+\varepsilon B_{ij}(\theta,\alpha)} \tilde{g}_{j,\varepsilon}(\alpha)d\alpha\\
		&-\sum_{j=1}^m  \frac{2\pi\tilde{g}_{i,\varepsilon}(\theta)}{R_i(\theta)^2+(R'_i(\theta))^2}\int\!\!\!\!\!\!\!\!\!\; {}-{} \nabla^\perp H(z_i(\theta),z_j(\alpha))\cdot [R_i(\theta)(-\sin\theta, \cos\theta)+\varepsilon f'_i(\theta)(\cos\theta, \sin\theta)]\\
		&\quad\times\tilde{g}_{i,\varepsilon}(\alpha) d\alpha =:\mathcal{F}_{i,21}+\mathcal{F}_{i,22}+\mathcal{F}_{i,23}+\mathcal{F}_{i,24}+\mathcal{F}_{i,25}+\mathcal{F}_{i,26}.
	\end{split}
\end{equation}

\section{Extension and regularity of functionals}
To apply the implicit function theorem at $\varepsilon=0$, we need to extend the functions $\mathcal{F}_{i,1}$ and $\mathcal{F}_{i, 2}$ defined in Section \ref{2} to $\varepsilon\leq 0$ and check the $C^1$ regularity.

 Let us first show the continuity of these functionals. Letting $V$ be the unit ball in $\left(X^{k+1}\times X^k\right)^m$ and $B_{r_0}(\mathbf{x}_0)$ be the ball centered at $\mathbf{x_0}$ in $\Omega^m$, we have the following proposition.
\begin{proposition}\label{p3-1}
	The functionals  $\mathcal{F}_{i,1}$ and $\mathcal{F}_{i,2}$ can be extended from $(-\varepsilon_0, \varepsilon_0)\times B_{r_0}(\mathbf{x}_0) \times V$ to $X^{k}\times X^k$ as continuous functionals.
\end{proposition}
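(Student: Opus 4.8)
The plan is to prove Proposition \ref{p3-1} by examining each of the six pieces $\mathcal{F}_{i,1k}$ ($k=1,\ldots,6$) of $\mathcal{F}_{i,1}$ in \eqref{2-4}, and the six pieces $\mathcal{F}_{i,2k}$ of $\tilde{\mathcal{F}}_{i,2}$ in \eqref{2-6}, separately. The crucial observation is that the apparent singularity in $\varepsilon$ — the prefactor $1/\varepsilon$ in $\mathcal{F}_{i,11}$, $\mathcal{F}_{i,21}$, $\mathcal{F}_{i,22}$ — is spurious: expanding the kernel $\frac{R_i(\alpha)\sin(\theta-\alpha)}{A(\theta,\alpha)+\varepsilon B(f_i,\theta,\alpha)}$ about $\varepsilon=0$ and using that $\sin(\theta-\alpha)$ is odd in $\theta-\alpha$ while $A(\theta,\alpha)=4\sin^2(\tfrac{\theta-\alpha}{2})$ is even, one sees that $P.V.\!\int\!\!-\,\frac{\kappa_i\sin(\theta-\alpha)}{A(\theta,\alpha)}d\alpha=0$, so the $O(1)$ term in the numerator integrates to zero and only the $O(\varepsilon)$ remainder survives, cancelling the $1/\varepsilon$. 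Concretely, I would write
\begin{equation*}
\frac{1}{A+\varepsilon B}=\frac{1}{A}-\frac{\varepsilon B}{A(A+\varepsilon B)},
\end{equation*}
substitute into $\mathcal{F}_{i,11}$, observe the leading term vanishes by oddness, and thereby obtain an explicit formula valid for all $\varepsilon\in(-\varepsilon_0,\varepsilon_0)$ (including $\varepsilon\le 0$) in which no $1/\varepsilon$ appears. The same device handles $\mathcal{F}_{i,21}$ and $\mathcal{F}_{i,22}$, where one groups the two integrals together so the combination $f_i'(\theta)R_i(\alpha)\sin(\theta-\alpha)+R_i(\theta)R_i(\alpha)(\cos(\theta-\alpha)-1)$ in the numerator, evaluated against $\kappa_i/A$, integrates to zero (again using parity); the residual is $O(\varepsilon)$ times a bounded singular integral, so division by $\varepsilon$ is harmless. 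For this to define an \emph{extension} one then simply declares the resulting singularity-free expressions to be the definition of $\mathcal{F}_{i,1}$, $\tilde{\mathcal F}_{i,2}$ for $\varepsilon\le 0$.

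Next I would establish continuity of each resulting piece as a map $(-\varepsilon_0,\varepsilon_0)\times B_{r_0}(\mathbf{x}_0)\times V\to X^k\times X^k$. For the self-interaction terms $\mathcal{F}_{i,11},\mathcal{F}_{i,12},\mathcal{F}_{i,13}$ and $\mathcal{F}_{i,21},\mathcal{F}_{i,22},\mathcal{F}_{i,23}$ the denominators $A(\theta,\alpha)+\varepsilon B(f_i,\theta,\alpha)$ must be shown to stay comparable to $A(\theta,\alpha)$: since $B(f_i,\theta,\alpha)$ carries a factor $4(f_i(\theta)+f_i(\alpha))\sin^2(\tfrac{\theta-\alpha}{2})+\varepsilon(\cdots)$ with the singular part proportional to $\sin^2(\tfrac{\theta-\alpha}{2})$, for $\|f_i\|_{H^{k+1}}\le 1$ and $\varepsilon_0$ small one gets $A+\varepsilon B\ge c\,A$ pointwise, so each kernel is a genuine Cauchy-type singular kernel of the form (smooth bounded function)$\times\frac{1}{A}\times$(odd factor), to which the standard $H^k$ boundedness of the Hilbert transform / principal-value operator on the circle applies; Sobolev embedding $H^k\hookrightarrow C^{k-2}$ with $k\ge 3$ and the algebra property of $H^k$ ($k\ge 1$) control products and quotients of the remaining factors. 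For the far-field terms $\mathcal{F}_{i,14},\mathcal{F}_{i,15}$, $\mathcal{F}_{i,24},\mathcal{F}_{i,25}$ the denominator is $A_{ij}+\varepsilon B_{ij}(\theta,\alpha)$, and since $x_i\in B_{r_0}(x_{0,i})$ with $x_{0,i}\ne x_{0,j}$ we have $A_{ij}=|x_i-x_j|^2\ge c_0>0$ after shrinking $r_0$; hence these kernels are smooth in all arguments and there is no singularity at all, so continuity is immediate from dominated convergence together with the algebra/Sobolev estimates. The terms $\mathcal{F}_{i,16},\mathcal{F}_{i,26}$ involve $\nabla H(z_i(\theta),z_j(\alpha))$; here I would use that $H(x,y)$ is the regular part of the Green function, hence smooth on $\overline{B_{r_0}(x_{0,i})}\times\overline{B_{r_0}(x_{0,j})}$ (this holds even for unbounded $\Omega$ since $H$ is harmonic away from the diagonal and these balls are compactly contained in $\Omega$), so again the integrand is smooth and continuity follows by the same routine argument. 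Finally, $\mathcal{F}_{i,2}=(I-P_0)\tilde{\mathcal F}_{i,2}$ is continuous because $P_0$ is a bounded projection on $X^k$ (indeed $P_0$ annihilates $X^k$ so $\mathcal{F}_{i,2}=\tilde{\mathcal F}_{i,2}$ up to its mean), and the target $X^k\times X^k$ is respected since each integrand, being built from $\cos$, $\sin$, $f_i$, $g_i$ and smooth functions, has the right Fourier structure (one checks the output has zero mean where required, which is exactly what the $(I-P_0)$ enforces for the second component and what the parity/cancellation gives for the relevant modes of the first).

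The main obstacle is the first step: carefully extracting the hidden cancellation that removes the $1/\varepsilon$ and simultaneously showing that what remains is not merely finite but genuinely \emph{continuous in $\varepsilon$ across $\varepsilon=0$} and into the correct function space. The delicate point is that after the cancellation one is left with expressions like $P.V.\!\int\!\!-\,\frac{B(f_i,\theta,\alpha)\,(\text{odd factor})}{A(A+\varepsilon B)}\tilde g_{i,\varepsilon}(\alpha)\,d\alpha$ in which a principal-value singular integral persists, and one must verify that the map $(\varepsilon,f_i,g_i)\mapsto$ this integral is continuous as an $H^k$-valued map — this requires uniform (in $\varepsilon$) bounds on the singular integral operator with kernel depending on $\varepsilon$ and $f_i$, plus continuity of that dependence, which is the technical heart of the proof. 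A secondary subtlety is tracking which Fourier modes of each $\mathcal{F}_{i,1k}$ can be nonzero, to confirm the codomain is $X^k\times X^k$ rather than a larger space; this is bookkeeping but must be done to make the statement precise. The pieces involving $\nabla H$ and the mutual-interaction terms, by contrast, are routine once one records that the relevant balls are compactly contained in $\Omega$.
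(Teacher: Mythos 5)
Your treatment of $\mathcal{F}_{i,1}$ is essentially the paper's argument (Taylor-expand the kernel via $\tfrac{1}{A+\varepsilon B}=\tfrac1A-\tfrac{\varepsilon B}{A(A+\varepsilon B)}$, kill the $\kappa_i$-term by oddness of $\sin(\theta-\alpha)$ against $A=4\sin^2(\tfrac{\theta-\alpha}{2})$, then singular-integral estimates), but your mechanism for the second functional is wrong, and the step as you state it would fail. You claim that in $\mathcal{F}_{i,21}+\mathcal{F}_{i,22}$ the combination $f_i'(\theta)R_i(\alpha)\sin(\theta-\alpha)+R_i(\theta)R_i(\alpha)(\cos(\theta-\alpha)-1)$ integrates to zero against $\kappa_i/A$ by parity, so that "division by $\varepsilon$ is harmless." It does not: since $\cos(\theta-\alpha)-1=-\tfrac{A}{2}$, the second summand contributes
\begin{equation*}
\frac{1}{2\pi}\int_0^{2\pi}\frac{R_i(\theta)R_i(\alpha)\bigl(\cos(\theta-\alpha)-1\bigr)}{A(\theta,\alpha)+\varepsilon B}\,\tilde g_{i,\varepsilon}(\alpha)\,d\alpha
=-\frac{R_i(\theta)}{2}\Bigl(\kappa_i+O(\varepsilon)\Bigr),
\end{equation*}
because $f_i,g_i$ have zero mean, so the average of $R_i\tilde g_{i,\varepsilon}$ is $\kappa_i+O(\varepsilon^2)$. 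Multiplying by the prefactor $\tfrac{\tilde g_{i,\varepsilon}(\theta)}{\varepsilon(R_i^2+(R_i')^2)}$ gives $-\tfrac{\kappa_i^2}{2\varepsilon}$ plus bounded terms: $\tilde{\mathcal{F}}_{i,2}$ genuinely blows up like $\kappa_i^2/(2\varepsilon)$ and cannot be extended to $\varepsilon\le 0$ as you propose. The actual rescue, which the paper uses, is that this divergent piece is constant in $\theta$ (up to bounded remainders), so it is annihilated by the projection $I-P_0$ in the definition $\mathcal{F}_{i,2}=(I-P_0)\tilde{\mathcal{F}}_{i,2}$; only the projected functional extends continuously (this is exactly the paper's remark that "$I-P_0$ eliminates all constant terms in $\tilde{\mathcal{F}}_{i,2}$, which also removes singularity"). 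In your write-up $I-P_0$ is relegated to Fourier bookkeeping ("$\mathcal{F}_{i,2}=\tilde{\mathcal F}_{i,2}$ up to its mean"), so the essential cancellation is misattributed and the extension step for $\mathcal{F}_{i,2}$ is unjustified as written; note that for $\mathcal{F}_{i,21}$ there is nothing to cancel at all, since the numerator already carries an explicit factor $\varepsilon f_i'(\theta)$.

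A secondary, smaller point: for the $H^k$ bounds you invoke "the standard $H^k$ boundedness of the Hilbert transform" plus the algebra property, but when $k$ derivatives fall on $f_i$ inside the kernel one is left with difference quotients of $\partial^k f_i$, i.e.\ terms of the type $f_i'(\theta)\,(-\Delta)^{1/2}(\partial^k f_i)(\theta)$ and $\int \bar{\mathcal K}_R\,(\partial^k f_i(\theta)-\partial^k f_i(\theta-\alpha))\,d\alpha$, which are not Hilbert transforms of $H^k$ data; they are controlled only by $\|f_i\|_{H^{k+1}}$ (via the fractional Laplacian and a Hardy-type inequality), which is precisely why the paper loses one derivative in $f$. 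You do place $f$ in $X^{k+1}$ and flag this as the "technical heart," so this is an omission of detail rather than an error, but it should be made explicit since it is where the choice of spaces is actually used.
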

\begin{proof}
	Throughout the proof, we will frequently use the following Taylor's formula:
	\begin{equation}\label{3-1}
		\frac{1}{(A+B)^\lambda}=\frac{1}{A^\lambda}-\lambda\int_0^1\frac{B}{(A+tB)^{1+\lambda}}dt.
	\end{equation}
	Let us consider $\mathcal{F}_{i,1}$ first. We need to prove that $\partial^l \mathcal{F}_{i,1}\in L^2$ for $l=0,1, \ldots, k$.
	For the first term $$\mathcal{F}_{i, 11}:=\frac{1}{\varepsilon}P.V.\int\!\!\!\!\!\!\!\!\!\; {}-{} \frac{(1+\varepsilon f_i(\alpha))\sin(\theta-\alpha)}{A(\theta, \alpha)+\varepsilon B(f_i,\theta,\alpha)} (\kappa_i+\varepsilon g_i(\alpha))d\alpha,$$  since $R_i(x)=1+\varepsilon f_i(x)$, the possible singularity caused by $\varepsilon=0$ may occur only when we take zeroth order derivative of $\mathcal{F}_{i,11}$. Thus, we first show that $\mathcal{F}_{i, 11}\in L^2$.
	We decompse the kernel into two parts
	\begin{equation}\label{3-2}
		\frac{1}{A(\theta, \alpha)+\varepsilon B(f_i,\theta,\alpha)}
		=\frac{1}{4\sin^2\left(\frac{\theta-\alpha}{2}\right)}\cdot\frac{1}{1+2\varepsilon f_i(\theta)+\varepsilon^2(f_i(\theta)^2+f'_i(\theta)^2)} +\mathcal{K}_R,
	\end{equation}
	where $\mathcal{K}_R:=\frac{1}{A(\theta, \alpha)+\varepsilon B(f_i,\theta,\alpha)}-\frac{1}{4\sin^2\left(\frac{\theta-\alpha}{2}\right)}\cdot\frac{1}{1+2\varepsilon f_i(\theta)+\varepsilon^2(f_i(\theta)^2+f'_i(\theta)^2)}$ is regular than $\frac{1}{4\sin^2\left(\frac{\theta-\alpha}{2}\right)}$. Indeed, by using \eqref{3-1}, we calculate
	\begin{equation*}
		\begin{split}
			&\quad\sin\left(\theta-\alpha\right)\mathcal{K}_R\\
			&=\frac{\sin\left(\theta-\alpha\right)}{A(\theta, \alpha)+\varepsilon B(f_i,\theta,\alpha)}-\frac{\sin\left(\theta-\alpha\right)}{4\sin^2\left(\frac{\theta-\alpha}{2}\right)}\cdot\frac{1}{1+2\varepsilon f_i(\theta)+\varepsilon^2(f_i(\theta)^2+f'_i(\theta)^2)}\\
			&=\frac{\sin\left(\theta-\alpha\right)}{1+2\varepsilon f_i(\theta)+\varepsilon^2(f_i(\theta)^2+f'_i(\theta)^2)}\frac{\left(1+2\varepsilon f_i(\theta)+\varepsilon^2(f_i(\theta)^2+f'_i(\theta)^2)\right)4\sin^2\left(\frac{\theta-\alpha}{2}\right)-A(\theta, \alpha)-\varepsilon B(f_i,\theta,\alpha)}{4\sin^2\left(\frac{\theta-\alpha}{2}\right)(A(\theta, \alpha)+\varepsilon B(f_i,\theta,\alpha))}\\
			&=\frac{\varepsilon \sin\left(\theta-\alpha\right)}{1+2\varepsilon f_i(\theta)+\varepsilon^2(f_i(\theta)^2+f'_i(\theta)^2)}\frac{ f_i(\theta)-f_i(\alpha)+\varepsilon f_i(\theta)(f_i(\theta)-f_i(\alpha))+\varepsilon\frac{4f'_i(\theta)^2\sin^2\left(\frac{\theta-\alpha}{2}\right)-(f_i(\theta)-f_i(\alpha))^2}{4\sin^2\left(\frac{\theta-\alpha}{2}\right)}}{A(\theta, \alpha)+\varepsilon B(f_i,\theta,\alpha)}\\
			 &=\varepsilon\left(\frac{(f_i(\theta)-f_i(\alpha))\sin\left(\theta-\alpha\right)}{4\sin^2\left(\frac{\theta-\alpha}{2}\right)}+O(\varepsilon)\right),
		\end{split}
	\end{equation*}
	where the constant in $O(\varepsilon)$ depends on $\|f\|_{W^{2,\infty}}\leq C\|f\|_{H^3}$.
	This implies  $$ |\sin\left(\theta-\alpha\right)\mathcal{K}_R|\leq C\varepsilon.$$
	Then, it is easy to see that
	\begin{equation*}
		\begin{split}
			&\frac{1}{\varepsilon}\int\!\!\!\!\!\!\!\!\!\; {}-{} \mathcal{K}_R\sin(\theta-\alpha) (1+\varepsilon f_i(\alpha))(\kappa_i+\varepsilon g_i(\alpha))d\alpha\\
			=&\int\!\!\!\!\!\!\!\!\!\; {}-{}\frac{(1+\varepsilon f_i(\alpha))(\kappa_i+\varepsilon g_i(\alpha))(f_i(\theta)-f_i(\alpha))\sin\left(\theta-\alpha\right)}{4\sin^2\left(\frac{\theta-\alpha}{2}\right)}+O(\varepsilon)\\
			=&\int\!\!\!\!\!\!\!\!\!\; {}-{}\frac{\kappa_i(f_i(\theta)-f_i(\alpha))\sin\left(\theta-\alpha\right)}{4\sin^2\left(\frac{\theta-\alpha}{2}\right)}+\varepsilon \mathcal{R}_{111}\\
			=&P.V.\int\!\!\!\!\!\!\!\!\!\; {}-{}\frac{-\kappa_i f_i(\alpha)\sin\left(\theta-\alpha\right)}{4\sin^2\left(\frac{\theta-\alpha}{2}\right)}+\varepsilon \mathcal{R}_{111},
		\end{split}
	\end{equation*}
	where $\mathcal{R}_{111}$ is regular and bounded.
	Hence, to prove $\mathcal{F}_{i, 11}\in L^2$, we  only need to estimate the rest term $\frac{1}{1+2\varepsilon f_i(\theta)+\varepsilon^2(f_i(\theta)^2+f'_i(\theta)^2)}\frac{1}{\varepsilon}P.V.\int\!\!\!\!\!\!\!\!\!\; {}-{} \frac{\sin(\theta-\alpha)}{4\sin^2\left(\frac{\theta-\alpha}{2}\right)} (1+\varepsilon f_i(\alpha))(\kappa_i+\varepsilon g_i(\alpha))d\alpha$. By the odd symmetry and \eqref{3-1}, we have
	\begin{equation*}
		\begin{split}
			&\frac{1}{1+2\varepsilon f_i(\theta)+\varepsilon^2(f_i(\theta)^2+f'_i(\theta)^2)}\frac{1}{\varepsilon}P.V.\int\!\!\!\!\!\!\!\!\!\; {}-{} \frac{\sin(\theta-\alpha)}{4\sin^2\left(\frac{\theta-\alpha}{2}\right)} (1+\varepsilon f_i(\alpha))(\kappa_i+\varepsilon g_i(\alpha))d\alpha\\
			=&\frac{1}{1+2\varepsilon f_i(\theta)+\varepsilon^2(f_i(\theta)^2+f'_i(\theta)^2)} P.V.\int\!\!\!\!\!\!\!\!\!\; {}-{} \frac{\sin(\theta-\alpha)}{4\sin^2\left(\frac{\theta-\alpha}{2}\right)} (\kappa_if_i(\alpha)+ g_i(\alpha)+\varepsilon f_i(\alpha) g_i(\alpha))d\alpha\\
			=&P.V.\int\!\!\!\!\!\!\!\!\!\; {}-{} \frac{(\kappa_if_i(\alpha)+ g_i(\alpha))\sin(\theta-\alpha)}{4\sin^2\left(\frac{\theta-\alpha}{2}\right)} d\alpha+\varepsilon \mathcal{R}_{112}.
		\end{split}
	\end{equation*}
Making the expansion $f_i(\alpha)=f_i(\theta)+O(|\sin\left(\frac{\theta-\alpha}{2}\right)|)$ and $g_i(\alpha)=g_i(\theta)+O(|\sin\left(\frac{\theta-\alpha}{2}\right)|)$, then, we find
	\begin{equation*}
			P.V.\int\!\!\!\!\!\!\!\!\!\; {}-{} \frac{(\kappa_if_i(\alpha)+ g_i(\alpha))\sin(\theta-\alpha)}{4\sin^2\left(\frac{\theta-\alpha}{2}\right)} d\alpha
			=\int\!\!\!\!\!\!\!\!\!\; {}-{} \frac{\sin(\theta-\alpha)}{4\sin^2\left(\frac{\theta-\alpha}{2}\right)}  O(|\sin\left(\frac{\theta-\alpha}{2}\right))d\alpha=O(1),
	\end{equation*}
	where we have used the fact $\left|\frac{\sin(\theta-\alpha)}{4\sin^2\left(\frac{\theta-\alpha}{2}\right)}  O(\sin\left(\frac{\theta-\alpha}{2}\right))\right|\leq C$.
	Therefore, it holds that $P.V.\int\!\!\!\!\!\!\!\!\!\; {}-{} \frac{(\kappa_if_i(\alpha)+ g_i(\alpha))\sin(\theta-\alpha)}{4\sin^2\left(\frac{\theta-\alpha}{2}\right)} d\alpha$ belongs to $L^\infty$ and hence belongs to $L^2$. Moreover, $\mathcal{R}_{112}$ is regular and bounded.
	We conclude  that $\mathcal{F}_{i,11}\in L^\infty$. Furthermore, it holds
	\begin{equation}\label{3-3}
		\begin{split}
			\mathcal{F}_{i,11}=&\frac{1}{1+2\varepsilon f_i(\theta)+\varepsilon^2(f_i(\theta)^2+f'_i(\theta)^2)}\frac{1}{\varepsilon}P.V.\int\!\!\!\!\!\!\!\!\!\; {}-{} \frac{\sin(\theta-\alpha)}{4\sin^2\left(\frac{\theta-\alpha}{2}\right)} (1+\varepsilon f_i(\alpha))(\kappa_i+\varepsilon g_i(\alpha))d\alpha\\
			\qquad\qquad\,+&\frac{1}{\varepsilon}\int\!\!\!\!\!\!\!\!\!\; {}-{} \mathcal{K}_R\sin(\theta-\alpha) (1+\varepsilon f_i(\alpha))(\kappa_i+\varepsilon g_i(\alpha))d\alpha\\
			=&P.V.\int\!\!\!\!\!\!\!\!\!\; {}-{} \frac{ g_i(\alpha))\sin(\theta-\alpha)}{4\sin^2\left(\frac{\theta-\alpha}{2}\right)} d\alpha+\varepsilon \mathcal{R}_{11},
		\end{split}
	\end{equation}
	where $\mathcal{R}_{11}=\mathcal{R}_{111}+\mathcal{R}_{112}$ is regular and bounded.

	Next, we prove that $\partial^k \mathcal{F}_{i,11}\in L^2$. For convince, we rewrite $\mathcal{F}_{i,11}$ as follows  by changing the variable $\alpha$ to $\theta-\alpha$
	$$\mathcal{F}_{i, 11}:=\frac{1}{\varepsilon}P.V.\int\!\!\!\!\!\!\!\!\!\; {}-{} \frac{(1+\varepsilon f_i(\theta-\alpha))\sin(\alpha)}{A(\theta, \theta-\alpha)+\varepsilon B(f_i,\theta,\theta-\alpha)} (\kappa_i+\varepsilon g_i(\theta-\alpha))d\alpha.$$
	Taking $k$th derivatives of $\mathcal{F}_{i,11}$,  we see that the most singular term is
	\begin{equation*}
		\begin{array}{ll}
 P.V.\int\!\!\!\!\!\!\!\!\!\; {}-{} \frac{ \partial^k f_i(\theta-\alpha)\sin(\alpha)}{A(\theta, \theta-\alpha)+\varepsilon B(f_i,\theta,\theta-\alpha)} (\kappa_i+\varepsilon g_i(\theta-\alpha))d\alpha &\\
\,+P.V.\int\!\!\!\!\!\!\!\!\!\; {}-{} \frac{(1+\varepsilon f_i(\theta-\alpha))\sin(\alpha)}{A(\theta, \theta-\alpha)+\varepsilon B(f_i,\theta,\theta-\alpha)}  \partial^k g_i(\theta-\alpha)d\alpha &\\
\,-P.V.\int\!\!\!\!\!\!\!\!\!\; {}-{} \frac{(1+\varepsilon f_i(\theta-\alpha))(\kappa_i+\varepsilon g_i(\theta-\alpha))\sin(\alpha)}{(A(\theta, \theta-\alpha)+\varepsilon B(f_i,\theta,\theta-\alpha))^2}\left[4(\partial^kf_i(\theta)+\partial^k f_i(\theta-\alpha))\sin^2\left(\frac{\theta-\alpha}{2}\right)\right.  &\\
\, \left. +2\varepsilon(f_i(\theta)-f_i(\theta-\alpha))(\partial^kf_i(\theta)-\partial^kf_i(\theta-\alpha))\right. &\\
\,\left.+4\varepsilon (\partial^kf_i(\theta)f_i(\theta-\alpha)+f_i(\theta)\partial^kf_i(\theta-\alpha))\sin^2\left(\frac{\theta-\alpha}{2}\right)\right] d\alpha &\\
=:I_1+I_2+I_3.
		\end{array}
	\end{equation*}
	We first deal with the term $I_1$. By the splitting of the kernel \eqref{3-2}, we derive
	\begin{equation*}
		\begin{split}
			&	I_1= P.V.\int\!\!\!\!\!\!\!\!\!\; {}-{} \frac{ \partial^k f_i(\theta-\alpha)\sin(\alpha)}{A(\theta, \theta-\alpha)+\varepsilon B(f_i,\theta,\theta-\alpha)} (\kappa_i+\varepsilon g_i(\theta-\alpha))d\alpha\\
			&\,\quad =P.V.\int\!\!\!\!\!\!\!\!\!\; {}-{} \frac{ \partial^k f_i(\alpha)\sin(\theta-\alpha)}{A(\theta, \alpha)+\varepsilon B(f_i,\theta,\alpha)} (\kappa_i+\varepsilon g_i(\alpha))d\alpha\\
			&\,\quad =\frac{1}{1+2\varepsilon f_i(\theta)+\varepsilon^2(f_i(\theta)^2+f'_i(\theta)^2)} P.V.\int\!\!\!\!\!\!\!\!\!\; {}-{}\frac{\partial^k f_i(\alpha)(\kappa_i+\varepsilon g_i(\alpha))\sin(\theta-\alpha)}{4\sin^2\left(\frac{\theta-\alpha}{2}\right)} \\
			&\qquad+P.V.\int\!\!\!\!\!\!\!\!\!\; {}-{}\mathcal{K}_R \sin(\theta-\alpha)\partial^k f_i(\alpha)(\kappa_i+\varepsilon g_i(\alpha))d\alpha.
		\end{split}
	\end{equation*}
	Noting that $|\mathcal{K}_R \sin(\theta-\alpha)|\leq C\varepsilon $, we have $\|P.V.\int\!\!\!\!\!\!\!\!\!\; {}-{}\mathcal{K}_R \sin(\theta-\alpha)\partial^k f_i(\alpha)(\kappa_i+\varepsilon g_i(\alpha))d\alpha\|_{L^2}\leq C||f_i||_{H^k}\|g\|_{L^2}$ is bounded. Since $P.V.\int\!\!\!\!\!\!\!\!\!\; {}-{}\frac{\partial^k f_i(\alpha)(\kappa_i+\varepsilon g_i(\alpha))\sin(\theta-\alpha)}{4\sin^2\left(\frac{\theta-\alpha}{2}\right)}$ is the Hilbert transformation of $\partial^k f_i(\alpha)(\kappa_i+\varepsilon g_i(\alpha))$ and hence $$\left\|P.V.\int\!\!\!\!\!\!\!\!\!\; {}-{}\frac{\partial^k f_i(\alpha)(\kappa_i+\varepsilon g_i(\alpha))\sin(\theta-\alpha)}{4\sin^2\left(\frac{\theta-\alpha}{2}\right)}\right\|_{L^2}\leq \|\partial^k f_i(\alpha)(\kappa_i+\varepsilon g_i(\alpha))\|_{L^2}\leq ||f||_{H^k}||g||_{L^2}.$$
	
	Similarly, one can check that $||I_2||_{L^2}\leq ||f||_{L^2}||g||_{H^k}$.
	
	To estimate the last term $I_3$, we split the kernel as follows
\begin{equation*}
			\frac{4\sin^2\left(\frac{\alpha}{2}\right)}{(A(\theta, \theta-\alpha)+\varepsilon B(f_i,\theta,\theta-\alpha))^2}
			=\frac{1}{4\sin^2\left(\frac{\alpha}{2}\right)}\cdot\frac{1}{(1+2\varepsilon f_i(\theta)+\varepsilon^2(f_i(\theta)^2+f'_i(\theta)^2))^2} +\tilde{\mathcal{K}}_R,
\end{equation*}
	where $\tilde{\mathcal{K}}_R$  satisfies $| \tilde{\mathcal{K}}_R\sin\alpha|\leq C$. Since convolution with the kernel  $\frac{\sin\alpha}{4\sin^2\left(\frac{\alpha}{2}\right)}$ defines the Hilbert transformation, we find that \begin{tiny} $$P.V.\int\!\!\!\!\!\!\!\!\!\; {}-{} \frac{(1+\varepsilon f_i(\theta-\alpha))(\kappa_i+\varepsilon g_i(\theta-\alpha))\sin(\alpha)}{(A(\theta, \theta-\alpha)+\varepsilon B(f_i,\theta,\theta-\alpha))^2}\sin^2\left(\frac{\theta-\alpha}{2}\right)\left((\partial^kf_i(\theta)+\partial^k f_i(\theta-\alpha))
		(\partial^kf_i(\theta)f_i(\theta-\alpha)+f_i(\theta)\partial^kf_i(\theta-\alpha))\right) d\alpha$$ \end{tiny} belongs to $L^2$ due to the $L^2$ boundedness of Hilbert transformation and the regularity of $\tilde{\mathcal{K}}_R$.
	For the remaining term in $I_3$
	$$2\varepsilon P.V.\int\!\!\!\!\!\!\!\!\!\; {}-{} \frac{(1+\varepsilon f_i(\theta-\alpha))(\kappa_i+\varepsilon g_i(\theta-\alpha))\sin(\alpha)}{(A(\theta, \theta-\alpha)+\varepsilon B(f_i,\theta,\theta-\alpha))^2}(f_i(\theta)-f_i(\theta-\alpha))(\partial^kf_i(\theta)-\partial^kf_i(\theta-\alpha)) d\alpha,$$ we decompose the kernel
	\begin{equation*}
			\frac{(1+\varepsilon f_i(\theta-\alpha))(\kappa_i+\varepsilon g_i(\theta-\alpha))\sin(\alpha)}{(A(\theta, \theta-\alpha)+\varepsilon B(f_i,\theta,\theta-\alpha))^2}(f_i(\theta)-f_i(\theta-\alpha))\\
			=\frac{(1+\varepsilon f_i(\theta))(\kappa_i+\varepsilon g_i(\theta))f'_i(\theta)}{4\sin^2\left(\frac{\alpha}{2}\right)}+\bar{\mathcal{K}}_R,
	\end{equation*}
	where $\bar{\mathcal{K}}_R$ satisfies $|\bar{\mathcal{K}}_R\sin\alpha|\leq C$.	
	Then, we deduce
	\begin{equation*}
		\begin{split}
			&	P.V.\int\!\!\!\!\!\!\!\!\!\; {}-{} \frac{(1+\varepsilon f_i(\theta-\alpha))(\kappa_i+\varepsilon g_i(\theta-\alpha))\sin(\alpha)}{(A(\theta, \theta-\alpha)+\varepsilon B(f_i,\theta,\theta-\alpha))^2}(f_i(\theta)-f_i(\theta-\alpha))(\partial^kf_i(\theta)-\partial^kf_i(\theta-\alpha)) d\alpha\\
			&=(1+\varepsilon f_i(\theta))(\kappa_i+\varepsilon g_i(\theta))f'(\theta)P.V.\int\!\!\!\!\!\!\!\!\!\; {}-{} \frac{\partial^kf_i(\theta)-\partial^kf_i(\theta-\alpha)}{4\sin^2\left(\frac{\alpha}{2}\right)} d\alpha+P.V.\int\!\!\!\!\!\!\!\!\!\; {}-{} \bar{\mathcal{K}}_R(\partial^kf_i(\theta)-\partial^kf_i(\theta-\alpha)) d\alpha\\
			&=(1+\varepsilon f_i(\theta))(\kappa_i+\varepsilon g_i(\theta))f'_i(\theta)\left((-\Delta)^{\frac{1}{2}}(\partial^kf_i)\right)(\theta)+P.V.\int\!\!\!\!\!\!\!\!\!\; {}-{} \bar{\mathcal{K}}_R(\partial^kf_i(\theta)-\partial^kf_i(\theta-\alpha)) d\alpha.
		\end{split}
	\end{equation*}
	By Fourier transformation and Hardy inequality, we obtain
	$$||(-\Delta)^{\frac{1}{2}}(\partial^kf_i)||_{L^2}\leq ||\nabla \partial^kf_i||_{L^2}\leq ||f_i||_{H^{k+1}}$$ and $$\left\|\int\!\!\!\!\!\!\!\!\!\; {}-{} \bar{\mathcal{K}}_R(\partial^kf_i(\theta)-\partial^kf_i(\theta-\alpha)) d\alpha\right\|_{L^2}\leq  \|\nabla \partial^kf_i||_{L^2}\leq ||f_i||_{H^{k+1}}.$$
	Consequently, we have $\partial^k \mathcal{F}_{i,11}\in L^2$ and hence $\mathcal{F}_{i,11}\in H^k$.

	Now, we turn to the second term
	\begin{equation*}
		\mathcal{F}_{i,12}:=\frac{1}{1+\varepsilon f_i(\theta)}P.V.\int\!\!\!\!\!\!\!\!\!\; {}-{} \frac{ f_i'(\theta)(1+\varepsilon f_i(\alpha))(1-\cos(\theta-\alpha))}{A(\theta, \alpha)+\varepsilon B(f_i,\theta,\alpha)} (\kappa_i+\varepsilon g_i(\alpha))d\alpha.
	\end{equation*}
	Since $|1-\cos(\theta-\alpha)|=\sin^2\left(\frac{\theta-\alpha}{2}\right)$, the kernel of this term is actually regular and bounded. Therefore, it is easy to see that $\mathcal{F}_{i,12}\in H^k$. Moreover, by \eqref{3-1}, we find
	\begin{equation}\label{3-4}
		\begin{split}
			\mathcal{F}_{12}&=\frac{1}{1+\varepsilon f_i(\theta)}P.V.\int\!\!\!\!\!\!\!\!\!\; {}-{} \frac{ f'_i(\theta)(1+\varepsilon f_i(\alpha))(1-\cos(\theta-\alpha))}{A(\theta, \alpha)+\varepsilon B(f_i,\theta,\alpha)} (\kappa_i+\varepsilon g_i(\alpha))d\alpha\\
			&=P.V.\int\!\!\!\!\!\!\!\!\!\; {}-{} \frac{ \kappa_if'_i(\theta)(1-\cos(\theta-\alpha))}{A(\theta, \alpha)}d\alpha+\varepsilon \mathcal{R}_{12}\\
			&=\frac{\kappa_i}{2}f_i'(\theta)+ \varepsilon \mathcal{R}_{12},
		\end{split}
	\end{equation}
	where $\mathcal{R}_{12}$ is smooth and we have used the identity $1-\cos(\theta-\alpha)=2\sin^2\left(\frac{\theta-\alpha}{2}\right)=\frac{A(\theta, \alpha)}{2}$.
	
	For  $\mathcal{F}_{i,13}$, taking $k$th derivatives of $\mathcal{F}_{i,13}$,  we see that the most singular terms are
	\begin{equation*}
		\begin{array}{ll}
			\frac{\varepsilon\partial ^{k+1}f_i(\theta)}{1+\varepsilon f_i(\theta)}P.V.\int\!\!\!\!\!\!\!\!\!\; {}-{} \frac{ f_i(\theta)-f_i(\alpha)}{A(\theta, \alpha)+\varepsilon B(f_i,\theta,\alpha)} (\kappa_i+\varepsilon g_i(\alpha))d\alpha &\\
\,+\frac{\varepsilon f'_i(\theta)}{1+\varepsilon f_i(\theta)}P.V.\int\!\!\!\!\!\!\!\!\!\; {}-{} \frac{\partial ^{k}f_i(\theta)-\partial ^{k}f_i(\theta-\alpha)}{A(\theta, \theta-\alpha)+\varepsilon B(f_i,\theta,\theta-\alpha)} (\kappa_i+\varepsilon g_i(\theta-\alpha))d\alpha &\\
\,+\frac{\varepsilon^2 f_i'(\theta)}{1+\varepsilon f_i(\theta)}P.V.\int\!\!\!\!\!\!\!\!\!\; {}-{} \frac{(f_i(\theta)-f_i(\theta-\alpha))\partial ^{k}g_i(\theta-\alpha)}{A(\theta, \theta-\alpha)+\varepsilon B(f_i,\theta,\theta-\alpha)} d\alpha &\\
\,-\frac{\varepsilon f_i'(\theta)}{1+\varepsilon f_i(\theta)}P.V.\int\!\!\!\!\!\!\!\!\!\; {}-{} \frac{(f_i(\theta)-f_i(\theta-\alpha))(\kappa_i+\varepsilon g_i(\theta-\alpha))}{(A(\theta, \theta-\alpha)+\varepsilon B(f_i,\theta,\theta-\alpha))^2} \left[4(\partial^kf_i(\theta)+\partial^k f_i(\theta-\alpha))\sin^2\left(\frac{\theta-\alpha}{2}\right)\right.&\\ \qquad\qquad\qquad\qquad\,\left.+2\varepsilon(f_i(\theta)-f_i(\theta-\alpha))(\partial^kf_i(\theta)
-\partial^kf_i(\theta-\alpha))\right.&\\
\qquad\qquad\qquad\qquad\,\left.+4\varepsilon(\partial^kf_i(\theta)f_i(\theta-\alpha)
+f_i(\theta)\partial^kf_i(\theta-\alpha))\sin^2\left(\frac{\theta-\alpha}{2}\right)\right]d\alpha &\\
 =:J_1+J_2+J_3+J_4. &
\end{array}
	\end{equation*}
	Since $P.V.\int\!\!\!\!\!\!\!\!\!\; {}-{} \frac{ f_i(\theta)-f_i(\alpha)}{A(\theta, \alpha)+\varepsilon B(f_i,\theta,\alpha)} (\kappa_i+\varepsilon g_i(\alpha))d\alpha\in L^2$, by Taylor's expansion of $f_i(\alpha)$, we know that the first term $J_1$ is bounded in $L^2$.
	To deal with $J_2$, we split the kernel
	\begin{equation*}
		\begin{split}
			\frac{\kappa_i+\varepsilon g_i(\theta-\alpha)}{A(\theta, \theta-\alpha)+\varepsilon B(f_i,\theta,\theta-\alpha)}
			=\frac{\kappa_i+\varepsilon g_i(\theta)}{4\sin^2\left(\frac{\alpha}{2}\right)}+\hat{\mathcal{K}}_R,
		\end{split}
	\end{equation*}
	where $|\hat{\mathcal{K}}_R\sin\frac{\alpha}{2}|\leq C$. Therefore, we conclude
	\begin{equation*}
		\begin{split}
			&\frac{\varepsilon f'_i(\theta)}{1+\varepsilon f_i(\theta)}P.V.\int\!\!\!\!\!\!\!\!\!\; {}-{} \frac{\partial ^{k}f_i(\theta)-\partial ^{k}f_i(\theta-\alpha)}{A(\theta, \theta-\alpha)+\varepsilon B(f_i,\theta,\theta-\alpha)} (\kappa_i+\varepsilon g_i(\theta-\alpha))d\alpha\\
			=&\frac{\varepsilon f'_i(\theta)(\kappa_i+\varepsilon g_i(\theta))}{1+\varepsilon f_i(\theta)}P.V.\int\!\!\!\!\!\!\!\!\!\; {}-{}\frac{\partial ^{k}f_i(\theta)-\partial ^{k}f_i(\theta-\alpha)}{4\sin^2\left(\frac{\alpha}{2}\right)}+ \frac{\varepsilon f'_i(\theta)}{1+\varepsilon f_i(\theta)} P.V.\int\!\!\!\!\!\!\!\!\!\; {}-{} \hat{\mathcal{K}}_R (\partial ^{k}f_i(\theta)-\partial ^{k}f_i(\theta-\alpha)) d\alpha\\
			=&\frac{\varepsilon f_i'(\theta)(\kappa_i+\varepsilon g_i(\theta))}{1+\varepsilon f_i(\theta)} (-\Delta)^{\frac{1}{2}} (\partial ^{k}f_i)(\theta)+ \frac{\varepsilon f'_i(\theta)}{1+\varepsilon f(\theta)} P.V.\int\!\!\!\!\!\!\!\!\!\; {}-{} \hat{\mathcal{K}}_R (\partial ^{k}f_i(\theta)-\partial ^{k}f_i(\theta-\alpha)) d\alpha.
		\end{split}
	\end{equation*}
	By Fourier transformation and Hardy inequality we obtain
	$$||(-\Delta)^{\frac{1}{2}}(\partial^kf_i)||_{L^2}\leq ||\nabla \partial^kf_i||_{L^2}\leq ||f_i||_{H^{k+1}}$$ and $$\left\|\int\!\!\!\!\!\!\!\!\!\; {}-{} \hat{\mathcal{K}}_R(\partial^kf_i(\theta)-\partial^kf_i(\theta-\alpha)) d\alpha\right\|_{L^2}\leq  \|\nabla \partial^kf_i||_{L^2}\leq ||f_i||_{H^{k+1}}.$$
	We can show that the remaining terms $J_3$ and $J_4$ are  bounded in $L^2$ similarly. Moreover, it can be seen that
	\begin{equation}\label{3-5}
		\mathcal{F}_{i,13}=\frac{1}{1+\varepsilon f_i(\theta)}P.V.\int\!\!\!\!\!\!\!\!\!\; {}-{} \frac{\varepsilon f'_i(\theta)(f_i(\theta)-f_i(\alpha))}{A(\theta, \alpha)+\varepsilon B(f_i,\theta,\alpha)} (\kappa_i+\varepsilon g_i(\alpha))d\alpha=\varepsilon \mathcal{R}_{13},
	\end{equation}
	where $\mathcal{R}_{13}$ is regular.
	
	Since $H(x,y)$ is smooth in $\Omega$, the terms $\mathcal{F}_{i,14}$, $\mathcal{F}_{i,15}$ and $\mathcal{F}_{i,16}$ are apparently smooth and belong to $H^k$. Furthermore, we have
	\begin{equation}\label{3-6}
		\begin{array}{ll}
\mathcal{F}_{i,14}+\mathcal{F}_{i,15}+\mathcal{F}_{i,16}\,&\\
=\sum_{j\not=i}\int\!\!\!\!\!\!\!\!\!\; {}-{} \frac{\kappa_j(x_{i}-x_{j})\cdot (-\sin\theta, \cos\theta) }{|x_{i,\varepsilon}-x_{j,\varepsilon}|^2} d\alpha -\sum_{j=1}^m  2\pi\int\!\!\!\!\!\!\!\!\!\; {}-{} \kappa_j\nabla H(x_{i},x_{j})\cdot (-\sin\theta, \cos\theta) d\alpha+\varepsilon \mathcal{R}_{14}&\\
			=\sum_{j\not=i}2\pi\int\!\!\!\!\!\!\!\!\!\; {}-{} \kappa_j\nabla G(x_{i},x_{j})\cdot (-\sin\theta, \cos\theta) d\alpha\,&\\
\qquad- 2\pi\int\!\!\!\!\!\!\!\!\!\; {}-{} \kappa_i\nabla H(x_{i},x_{i})\cdot (-\sin\theta, \cos\theta) d\alpha+\varepsilon \mathcal{R}_{14},&
		\end{array}
	\end{equation}
	where $\mathcal{R}_{14}$ is bounded and smooth.
	
	By \eqref{3-3}, \eqref{3-4}, \eqref{3-5} and \eqref{3-6}, we conclude
	\begin{equation}\label{3-7}
		\begin{array}{ll}
\mathcal{F}_{i,1}( \varepsilon,\mathbf{x}, \bm f, \bm g)
			=P.V.\int\!\!\!\!\!\!\!\!\!\; {}-{} \frac{\sin(\theta-\alpha) g_i(\alpha)}{4\sin^2\left(\frac{\theta-\alpha}{2}\right)} d\alpha+\frac{\kappa_i}{2}f_i'(\theta)
			+\sum_{j\not=i}2\pi \kappa_j\nabla G(x_{i},x_{j})\cdot (-\sin\theta, \cos\theta)\,&\\
\qquad\qquad\qquad\qquad\qquad\,- 2\pi  \kappa_i\nabla H(x_{i},x_{i})\cdot (-\sin\theta, \cos\theta)+\varepsilon \mathcal{R}_{1},&\\
		\end{array}
	\end{equation}
	where $\mathcal{R}_{1}:=\mathcal{R}_{11}+\mathcal{R}_{12}+\mathcal{R}_{13}+\mathcal{R}_{14}$ is regular.
	Hence, we can define
	\begin{equation}\label{3-8}
		\begin{array}{ll}
\mathcal{F}_{i,1}( 0,\mathbf{x}, \bm f, \bm g)
			:=P.V.\int\!\!\!\!\!\!\!\!\!\; {}-{} \frac{\sin(\theta-\alpha) g_i(\alpha)}{4\sin^2\left(\frac{\theta-\alpha}{2}\right)} d\alpha+\frac{\kappa_i}{2}f'_i(\theta)
+\sum_{j\not=i}2\pi \kappa_j\nabla G(x_{i},x_{j})\cdot (-\sin\theta, \cos\theta) \,&\\
\qquad\qquad\qquad\qquad\,- 2\pi \kappa_i\nabla H(x_{i},x_{i})\cdot (-\sin\theta, \cos\theta).&
		\end{array}
	\end{equation}
	
	Next, we prove the continuity of $\mathcal{F}_{i,1}$. By \eqref{3-7} and the definition of $\mathcal{F}_{i,1}( 0,\mathbf{x}, \bm f,\bm g)$, one can easily check that $\mathcal{F}_{i,1}$ is continuous with respect to $\varepsilon$ at $\varepsilon=0$. Thus, we only need to prove that $\mathcal{F}_{i,1}$ is continuous with respect to $\varepsilon$ for $\varepsilon\not=0$. However, it is easy to see that the continuity of $ \mathcal{F}_{i,1}$ with respect to $\varepsilon$ is a consequence of its continuity with respect to $\bm f$ and $\bm g$ when $\varepsilon\not=0$, on which we will focus below.

	We only prove the continuity of $\mathcal{F}_{i,11}$ with respect to $f_i$ and $g_i$, the continuity of other terms in $\mathcal{F}_{i, 1}$ can be proven though similar or even easier method. We will use following notations: for a general function $h$, we denote $\Delta h=h(\theta)-h(\alpha), \ \ \ h=h(\theta), \ \ \ \tilde h=h(\alpha),$ and
	$$D(h)=\varepsilon^2(\Delta h)^2+4(1+\varepsilon h)(1+\varepsilon\tilde h)\sin^2\left(\frac{\theta-\alpha}{2}\right).$$
	To show the continuity of $\mathcal{F}_{i,11}$ with respect to $f_i$, let $(f_1,g),(f_2,g)\in X_i^k$. Then we can calculate the difference
	\begin{equation*}
		\begin{array}{ll}
\mathcal{F}_{11}(\varepsilon, f_2,g)-\mathcal{F}_{11}(\varepsilon, f_1,g)
			=P.V.\int\!\!\!\!\!\!\!\!\!\; {}-{} \frac{ (f_2(\alpha)-f_1(\alpha))\sin(\theta-\alpha)}{D(f_2)} (\kappa+\varepsilon g(\alpha))d\alpha\,&\\
\quad\,+\frac{1}{\varepsilon}P.V.\int\!\!\!\!\!\!\!\!\!\; {}-{} (1+\varepsilon f_1(\alpha))(\kappa+\varepsilon g(\alpha))\sin(\theta-\alpha)\left(\frac{1}{D(f_2)}-\frac{1}{D(f_1)}\right) d\alpha
=:K_1+K_2.&
		\end{array}
	\end{equation*}
	For the first term $K_1$, it is easy to prove $\|K_1\|_{H^k}\leq C\|f_1-f_2\|_{H^{k+1}}$ by the technique we have used before. For the second term $K_2$, since
	\begin{equation*}
		\begin{split}
			&\frac{1}{D(f_2)}-\frac{1}{D(f_1)}\\
			=&\frac{\varepsilon^2((\Delta f_1)^2-(\Delta f_2)^2)+4\varepsilon\big((f_1-f_2)(1+\varepsilon\tilde f_2)+(\tilde f_1-\tilde f_2)(1+\varepsilon f_1)\big)\sin^2(\frac{x-y}{2})}{D(f_1)D(f_2)}\\
			=&\varepsilon\frac{\varepsilon(\Delta f_1+\Delta f_2)(\Delta f_1-\Delta f_2)+4\big((f_1-f_2)(1+\varepsilon\tilde f_2)+(\tilde f_1-\tilde f_2)(1+\varepsilon f_1)\big)\sin^2(\frac{x-y}{2})}{D(f_1)D(f_2)}
		\end{split}
	\end{equation*}
	it holds that the singularity of $\frac{1}{D(f_2)}-\frac{1}{D(f_1)}$ is also of the order $O\left(\frac{1}{4\sin^2\left(\frac{\theta-\alpha}{2}\right)} \right)$, the same as the kernel in $\mathcal{F}_{i,11}$ itself. Therefore, using similar argument as above, we can prove that $\|K_2\|_{H^k}\leq \|f_2-f_1\|_{H^{k+1}}$, which shows the continuity of $\mathcal{F}_{i,11}$ with respect to $f_i$. Notice that  $\mathcal{F}_{i,11}$ is linear with respect to $g_i$. Then the continuity of $\mathcal{F}_{i,11}$ with respect to $g$ can be obtained by similar argument as the proof of boundedness of $\mathcal{F}_{i,11}$ in $H^k$.
	
	We have shown that the conclusion of Proposition \ref{p3-1} holds true for $\mathcal{F}_{i,1}$. 	The fact that $\mathcal{F}_{i,2}$ is well-defined and continuous can be verified in a similar way. Attention should be paid to that projection operator $I- P_0$ eliminates all constant terms in $\tilde{\mathcal{F}}_{i,2}$, which also removes  singularity in $\mathcal{F}_{i,2}$. By \eqref{3-1}, we obtain
	\begin{equation}\label{3-9}
		\begin{array}{ll}
\tilde{\mathcal{F}}_{i, 2}( \varepsilon,\mathbf{x}_\varepsilon, \bm f, \bm g)
			=\kappa_i^2f_i(\theta)-\kappa_i^2P.V.\int\!\!\!\!\!\!\!\!\!\; {}-{} \frac{f_i(\theta)-f_i(\alpha) }{4\sin^2\left(\frac{\theta-\alpha}{2}\right)} d\alpha-\kappa_i\frac{g_i(\theta)}{2}\,&\\
\qquad\qquad\qquad\qquad\,-\sum_{j\not=i}2\pi \kappa_i\kappa_j \nabla^\perp G(x_{i},x_{j})\cdot (-\sin\theta, \cos\theta)\,&\\
\qquad\qquad\qquad\qquad\,+ 2\pi \kappa_i^2\nabla^\perp H(x_{i},x_{i})\cdot (-\sin\theta, \cos\theta)+\varepsilon \mathcal{R}_2,&
		\end{array}
	\end{equation}
	where $\mathcal{R}_{2}$ is smooth. Thus, we define
	\begin{equation}\label{3-10}
		\begin{array}{ll}
\tilde{\mathcal{F}}_{i,2}( 0,\mathbf{x}, \bm f, \bm g)(\theta)
			=\kappa_i^2f_i(\theta)-\kappa_i^2P.V.\int\!\!\!\!\!\!\!\!\!\; {}-{} \frac{f_i(\theta)-f_i(\alpha) }{4\sin^2\left(\frac{\theta-\alpha}{2}\right)} d\alpha-\kappa_i\frac{g_i(\theta)}{2}&\\
\qquad\qquad\qquad\qquad\,\,-\sum_{j\not=i}2\pi \kappa_i\kappa_j \nabla^\perp G(x_{i},x_{j})\cdot (-\sin\theta, \cos\theta)\,&\\
\qquad\qquad\qquad\qquad\,\,+ 2\pi \kappa_i^2\nabla^\perp H(x_{i},x_{i})\cdot (-\sin\theta, \cos\theta).&
		\end{array}
	\end{equation}
\end{proof}

Our next proposition concerns the $C^1$ regularity.
\begin{proposition}\label{p3-2}
	The Gateaux derivatives $\partial_{(\bm f, \bm g)} \mathcal{F}_{i, 1}$ and  $\partial_{(\bm f, \bm g)} \mathcal{F}_{i, 2}$ exist and are continuous.
\end{proposition}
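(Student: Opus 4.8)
The plan is to compute the Gateaux derivatives $\partial_{(\bm f,\bm g)}\mathcal{F}_{i,1}$ and $\partial_{(\bm f,\bm g)}\mathcal{F}_{i,2}$ explicitly by differentiating under the integral sign, to check that the resulting linear maps are bounded from $(X^{k+1}\times X^k)^m$ into $(X^k\times X^k)^m$, and then to establish their continuity in $(\varepsilon,\mathbf{x},\bm f,\bm g)$ by estimating differences of kernels, exactly in the spirit of the $K_1,K_2$ decomposition already used for the continuity of $\mathcal{F}_{i,1}$ in the proof of Proposition \ref{p3-1}.

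First I would dispose of the dependence on $\bm g$. Inspecting \eqref{2-4} and \eqref{2-6}, $\mathcal{F}_{i,1}$ is affine in the components of $\bm g$ and $\tilde{\mathcal{F}}_{i,2}$ is quadratic in them, with coefficients given by the very integral operators written above with the factors $\tilde g_{i,\varepsilon}$, $\tilde g_{j,\varepsilon}$ deleted. Hence $\partial_{\bm g}\mathcal{F}_{i,1}$ and $\partial_{\bm g}\mathcal{F}_{i,2}$ exist and are given by the obvious formulas, and their boundedness and continuity follow verbatim from the estimates in Proposition \ref{p3-1}. The substance of the proof therefore lies in the derivative with respect to $\bm f$, since $f_i$ enters the singular denominators $A(\theta,\alpha)+\varepsilon B(f_i,\theta,\alpha)$ and the smooth denominator $R_i(\theta)^2+R_i'(\theta)^2$.

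For $\partial_{\bm f}$ I would differentiate term by term. For the non-singular contributions $\mathcal{F}_{i,12},\mathcal{F}_{i,14},\mathcal{F}_{i,15},\mathcal{F}_{i,16}$ and the analogous $\mathcal{F}_{i,2j}$, differentiability in $\bm f$ is routine, and at $\varepsilon=0$ the functionals \eqref{3-8} and \eqref{3-10} are affine in $(\bm f,\bm g)$ plus terms smooth in $\mathbf{x}$, so there $C^1$ regularity is immediate; moreover the expansions \eqref{3-7} and \eqref{3-9} show $\mathcal{F}_{i,\ell}(\varepsilon,\cdot)=\mathcal{F}_{i,\ell}(0,\cdot)+\varepsilon\mathcal{R}_\ell$ with $\mathcal{R}_\ell$ differentiable, which will give the matching of the derivatives as $\varepsilon\to 0$. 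It remains to treat $\mathcal{F}_{i,11}$, $\mathcal{F}_{i,13}$ and their counterparts $\mathcal{F}_{i,21},\mathcal{F}_{i,22},\mathcal{F}_{i,23}$ for $\varepsilon\neq 0$. Differentiating $1/(A+\varepsilon B(f_i,\cdot))$ in a direction $h$ produces a factor $\varepsilon(\partial_{f_i}B)[h]/(A+\varepsilon B)^2$; the crucial structural point, just as in the treatment of the term $I_3$ in the proof of Proposition \ref{p3-1}, is that $(\partial_{f_i}B)(f_i,\theta,\alpha)[h]$ carries the factor $4\sin^2\!\left(\tfrac{\theta-\alpha}{2}\right)$ together with an $\varepsilon$-small correction of the same order of vanishing (the term $2\varepsilon(f_i(\theta)-f_i(\alpha))(h(\theta)-h(\alpha))$ vanishes to second order at $\theta=\alpha$). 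Hence, after the kernel splitting \eqref{3-2} and its squared analogue, the worst surviving term is a Hilbert transform of $\partial^k h$, or of $h'$ times $(-\Delta)^{1/2}\partial^k(\cdot)$, all controlled in $L^2$ by $\|h\|_{H^{k+1}}$ using the $L^2$-boundedness of the Hilbert transform and the Hardy inequality. This shows the candidate derivative is a bounded operator; the same computation combined with the Taylor remainder \eqref{3-1} shows that $\tfrac1t\bigl(\mathcal{F}_{i,1}(\varepsilon,\mathbf{x},\bm f+t\bm h,\bm g)-\mathcal{F}_{i,1}(\varepsilon,\mathbf{x},\bm f,\bm g)\bigr)$ converges in $X^k$ to it as $t\to0$, so the Gateaux derivative exists.

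Finally, for continuity of $(\varepsilon,\mathbf{x},\bm f,\bm g)\mapsto\partial_{(\bm f,\bm g)}\bm{\mathcal F}$ as a map into $\mathcal{L}\bigl((X^{k+1}\times X^k)^m,(X^k\times X^k)^m\bigr)$, I would subtract the derivative operators at two nearby configurations and, as in the proof of Proposition \ref{p3-1}, write the kernel difference $1/(A+\varepsilon B(f_2))^2-1/(A+\varepsilon B(f_1))^2$ over a common denominator: the numerator contains a factor $f_2-f_1$ (and its derivative), while the order of the singularity is unchanged, so the operator norm of the difference is $O(\|f_1-f_2\|_{H^{k+1}}+|\varepsilon_1-\varepsilon_2|+|\mathbf{x}_1-\mathbf{x}_2|)$; continuity in $\bm g$ is trivial since the $\bm g$-dependence is polynomial with continuous coefficients, and continuity of the smooth Green-function terms in $\mathbf{x}$ is clear. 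The main obstacle is purely bookkeeping: one must verify that \emph{every} term produced by differentiation still has a numerator vanishing to sufficient order at $\theta=\alpha$ so that no kernel worse than $\sin(\theta-\alpha)/\sin^2\!\left(\tfrac{\theta-\alpha}{2}\right)$ (equivalently $(-\Delta)^{1/2}$) survives after the splittings; once this is organized exactly through the decompositions \eqref{3-2} and its variants already employed, the $L^2$-boundedness of the Hilbert transform and the Hardy inequality close all the estimates.
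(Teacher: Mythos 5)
Your proposal is correct and follows essentially the same route as the paper: differentiate under the integral sign to get the explicit candidate derivative, verify it via the difference quotient using the Taylor formula \eqref{3-1} and the fact that the differentiated kernels (thanks to the factor $4\sin^2\!\left(\tfrac{\theta-\alpha}{2}\right)$ in $\partial_{f_i}B[h]$) are no more singular than those of $\mathcal{F}_{i,11}$ itself, then close the estimates with the splitting \eqref{3-2}, the $L^2$-boundedness of the Hilbert transform and the Hardy inequality, with the $\bm g$-derivatives being easy by (almost) linearity and continuity of the derivative map obtained by the same kernel-difference argument as in Proposition \ref{p3-1}. This matches the paper's proof, which carries out exactly this computation for $\mathcal{F}_{i,11}$ (the terms $F_{111},F_{112}$) and treats the remaining terms and the continuity as analogous.
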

\begin{proof}
	We first prove that the derivative of $\mathcal{F}_{i,11}$ with respect to $f_i$ exists and is as follows,
	\begin{equation}\label{3-11}
		\partial_{f_i} \mathcal{F}_{i,11} h=F_{i,11i}h, \quad \forall \,h\in X^{k+1}
	\end{equation}
	where $F_{i,11i}$ is given by
	\begin{equation}\label{3-12}
		\begin{split}
			F_{i,11i}h
			:=&P.V.\int\!\!\!\!\!\!\!\!\!\; {}-{} \frac{h(\alpha)\sin(\theta-\alpha)}{A(\theta, \alpha)+\varepsilon B(f_i,\theta,\alpha)} (\kappa_i+\varepsilon g_i(\alpha))d\alpha\\
			-&P.V.\int\!\!\!\!\!\!\!\!\!\; {}-{} \frac{(1+\varepsilon f_i(\alpha))(\kappa_i+\varepsilon g_i(\alpha))\sin(\theta-\alpha)}{(A(\theta, \alpha)+\varepsilon B(f_i,\theta,\alpha))^2}  \left[4(h(\theta)+h(\alpha))\sin^2\left(\frac{\theta-\alpha}{2}\right)\right.\\
			&\quad \left.+2\varepsilon (f_i(\theta)-f_i(\alpha))(h(\theta)-h(\alpha))
+4\varepsilon\left(h(\theta)f_i(\alpha)+h(\alpha)f_i(\theta)\right)\sin^2
\left(\frac{\theta-\alpha}{2}\right)\right]d\alpha.
		\end{split}
	\end{equation}
	To prove \eqref{3-1}, one need to verify
	\begin{equation}\label{3-13}
		\lim\limits_{t\to0}\left\|\frac{\mathcal{F}_{i,11}(\varepsilon, f_i+th,g_i)-\mathcal{F}_{i,11}(\varepsilon, f_i,g_i)}{t}-F_{i,11i} h\right\|_{H^{k}}= 0.
	\end{equation}

	Using the notations given in Proposition \ref{p3-1}, we deduce
	\begin{equation*}
		\begin{split}
			&\frac{\mathcal{F}_{i,11}(\varepsilon, f_i+th,g_i)-\mathcal{F}_{i,11}(\varepsilon, f_i,g_i)}{t}-F_{i,11i} h\\
			=&\frac{1}{t\varepsilon} \int\!\!\!\!\!\!\!\!\!\; {}-{}(1+\varepsilon f_i(\alpha))(\kappa_i+\varepsilon g_i(\alpha))\sin(\theta-\alpha)\\
			&\times \left(\frac{1}{D(f_i+th)}-\frac{1}{D(f_i)}+t\frac{2\varepsilon^2\Delta f_i\Delta h+4\varepsilon((1+\varepsilon \tilde f_i)h+(1+\varepsilon f_i(\alpha))\tilde h)\sin^2(\frac{x-y}{2})}{D(f_i)^2}\right)\\
			+&\int\!\!\!\!\!\!\!\!\!\; {}-{} h(\alpha)(\kappa_i+\varepsilon g_i(\alpha))\sin(\theta-\alpha)\left(\frac{1}{D(f_1+th)}-\frac{1}{D(f_1)}\right)dy\\
			=&:F_{111}+F_{112}.
		\end{split}
	\end{equation*}
	By the  mean value theorem, we find
	\begin{equation*}
		\begin{split}
			&\frac{1}{D(f+th_1)}-\frac{1}{D(f)}=O\left( \frac{t\varepsilon}{4\sin^2\left(\frac{x-y}{2}\right)}\right),\\
			&\frac{1}{D(f+th_1)}-\frac{1}{D(f)}+t\frac{2\varepsilon^2\Delta f\Delta h_1+4\big(\varepsilon\tilde R h_1+\varepsilon \tilde h_1R\big)\sin^2(\frac{x-y}{2})}{D(f)^2}=O\left( \frac{t^2\varepsilon^2}{4\sin^2\left(\frac{x-y}{2}\right)}\right),	
		\end{split}
	\end{equation*}
	which means the kernels in $F_{111}$ and $F_{112}$ are of the same order as the kernel in  $\mathcal{F}_{i,11}$. Therefore, by similar argument as Proposition \ref{p3-1}, we have
	\begin{equation*}
		\|F_{111}\|_{H^{k}}+\|F_{112}\|_{H^{k}}\le Ct\|h\|_{X^{k+1}}.
	\end{equation*}
	
Letting $t\rightarrow 0$, we obtain \eqref{3-13} and hence obtain the existence of Gateaux derivative of $\mathcal{F}_{i,11}$. To prove the continuity of $\partial_{f_i} \mathcal{F}_{i,11}(\varepsilon, f_i, g_i)h$, one just need to verify by definition. Since there is no other new idea than the proof of continuity for $\mathcal{F}_{i,11}(\varepsilon, f_i, g_i)$, we omit it. The existence and continuity of Gateaux derivatives of other terms in $\mathcal{F}_{i,1}$ and $\mathcal{F}_{i,2}$ can be obtained via similar argument, which we leave out here.  Noting that $\mathcal{F}_{i,1}$ and $\mathcal{F}_{i,2}$ are almost linear dependent on $g$, it is much easier to compute their Gateaux derivatives with respect to $g$, so we leave them to our reader. For readers' convenience, we also write down the  derivatives of $\mathcal{F}_{i,1}$ and $\mathcal{F}_{i,2}$ in the following form directly without proof here.
	
Recall the definitions $\tilde{g}_{i,\varepsilon}(t)=\kappa_i+\varepsilon g_i(t)$, $R_i(t)=1+\varepsilon f_i(t)$. For any $h_1\in X^{k+1}$ and $h_2\in X^k$, then we have
	\begin{equation}\label{3-14}
		\begin{array}{ll}
\partial_{f_i}\mathcal{F}_{i,1}(\varepsilon,\mathbf{x}, \bm f, \bm g) h_1
			=P.V.\int\!\!\!\!\!\!\!\!\!\; {}-{} \frac{h_1(\alpha)\sin(\theta-\alpha)}{A(\theta, \alpha)+\varepsilon B(f_i,\theta,\alpha)} \tilde{g}_{i,\varepsilon}(\alpha)d\alpha&\\
\qquad\qquad\,-P.V.\int\!\!\!\!\!\!\!\!\!\; {}-{} \frac{R_i(\alpha)\sin(\theta-\alpha)}{(A(\theta, \alpha)+\varepsilon B(f_i,\theta,\alpha))^2} \tilde{g}_{i,\varepsilon}(\alpha) \left[4(h_1(\theta)+h_1(\alpha))\sin^2\left(\frac{\theta-\alpha}{2}\right)\right.&\\
\qquad\qquad\quad\, \left.+2\varepsilon (f_i(\theta)-f_i(\alpha))(h_1(\theta)-h_1(\alpha))+4\varepsilon\left(h_1(\theta)f_i(\alpha)+h_1(\alpha)f_i(\theta)\right)\sin^2\left(\frac{\theta-\alpha}{2}\right)\right]d\alpha\\
\qquad\qquad-\frac{\varepsilon h_1(\theta)}{R_i(\theta)^2}P.V.\int\!\!\!\!\!\!\!\!\!\; {}-{} \frac{ f_i'(\theta)\tilde{f}_{i,\varepsilon}(\alpha)(1-\cos(\theta-\alpha))}{A(\theta, \alpha)+\varepsilon B(f_i,\theta,\alpha)} \tilde{g}_{i,\varepsilon}(\alpha)d\alpha &\\
\qquad\qquad\,+\frac{1}{R_i(\theta)}P.V.\int\!\!\!\!\!\!\!\!\!\; {}-{} \frac{ (h_1'(\theta)R_i(\alpha)+\varepsilon f'_i(\theta) h_1(\alpha))(1-\cos(\theta-\alpha))}{A(\theta, \alpha)+\varepsilon B(f_i,\theta,\alpha)} \tilde{g}_{i,\varepsilon}(\alpha)d\alpha &\\
\qquad\qquad\,-\frac{1}{R_i(\theta)}P.V.\int\!\!\!\!\!\!\!\!\!\; {}-{} \frac{ f'_i(\theta)R_i(\alpha)\tilde{g}_{i,\varepsilon}(\alpha)(1-\cos(\theta-\alpha))}{(A(\theta, \alpha)+\varepsilon B(f_i,\theta,\alpha))^2} \left[4(h_1(\theta)+h_1(\alpha))\sin^2\left(\frac{\theta-\alpha}{2}\right)\right.&\\
\qquad\qquad\quad\, \left.+2\varepsilon (f_i(\theta)-f_i(\alpha))(h_1(\theta)-h_1(\alpha))+4\varepsilon\left(h_1(\theta)f_i(\alpha)+h_1(\alpha)f_i(\theta)\right)\sin^2\left(\frac{\theta-\alpha}{2}\right)\right]d\alpha\\
\qquad\qquad\,-\frac{\varepsilon h_1(\theta)}{R_i(\theta)^2}P.V.\int\!\!\!\!\!\!\!\!\!\; {}-{} \frac{\varepsilon f'_i(\theta)(f_i(\theta)-f_i(\alpha))}{A(\theta, \alpha)+\varepsilon B(f_i,\theta,\alpha)} \tilde{g}_{i,\varepsilon}(\alpha)d\alpha &\\
\qquad\qquad\,+\frac{1}{R_i(\theta)}P.V.\int\!\!\!\!\!\!\!\!\!\; {}-{} \frac{\varepsilon (h'_1(\theta)(f_i(\theta)-f_i(\alpha))+f'_i(\theta)(h_1(\theta)-h_1(\alpha)))}{A(\theta, \alpha)+\varepsilon B(f_i,\theta,\alpha)} \tilde{g}_{i,\varepsilon}(\alpha)d\alpha &\\
	\qquad\qquad\,-\frac{1}{R_i(\theta)}P.V.\int\!\!\!\!\!\!\!\!\!\; {}-{} \frac{\varepsilon f'_i(\theta)(f_i(\theta)-f_i(\alpha))\tilde{g}_{i,\varepsilon}(\alpha)}{(A(\theta, \alpha)+\varepsilon B(f_i,\theta,\alpha))^2}\left[4(h_1(\theta)+h_1(\alpha))\sin^2\left(\frac{\theta-\alpha}{2}\right)\right.&\\
\qquad\qquad\quad\, \left.+2\varepsilon (f_i(\theta)-f_i(\alpha))(h_1(\theta)-h_1(\alpha))+4\varepsilon\left(h_1(\theta)f_i(\alpha)
+h_1(\alpha)f_i(\theta)\right)\sin^2\left(\frac{\theta-\alpha}{2}\right)\right]d\alpha &\\
\qquad\qquad\,\,+O(\varepsilon),&\\
		\end{array}
	\end{equation}
	
	\begin{equation}\label{3-15}
		\partial_{f_j}\mathcal{F}_{i,1}(\varepsilon, \mathbf{x}, \bm f, \bm g) h_1=O(\varepsilon),
	\end{equation}
	
	\begin{equation}\label{3-16}
		\begin{array}{ll}
\partial_{g_i}\mathcal{F}_{i,1}( \varepsilon,\mathbf{x}, \bm f, \bm g)h_2
			=P.V.\int\!\!\!\!\!\!\!\!\!\; {}-{} \frac{\tilde{f}_{i,\varepsilon}(\alpha)\sin(\theta-\alpha)}{A(\theta, \alpha)+\varepsilon B(f_i,\theta,\alpha)}  h_2(\alpha)d\alpha\,&\\
	\quad\quad\,		+\frac{\varepsilon}{R_i(\theta)}P.V.\int\!\!\!\!\!\!\!\!\!\; {}-{} \frac{ f'_i(\theta)R_i(\alpha)(1-\cos(\theta-\alpha))}{A(\theta, \alpha)+\varepsilon B(f_i,\theta,\alpha)} h_2(\alpha)d\alpha+\frac{\varepsilon}{R_i(\theta)}P.V.\int\!\!\!\!\!\!\!\!\!\; {}-{} \frac{\varepsilon f'_i(\theta)(f_i(\theta)-f_i(\alpha))}{A(\theta, \alpha)+\varepsilon B(f_i,\theta,\alpha)} h_2(\alpha)d\alpha\,&\\
	\quad\quad\,-\frac{2\pi\varepsilon}{R_i(\theta)}\int\!\!\!\!\!\!\!\!\!\; {}-{} \nabla H(z_i(\theta),z_i(\alpha))\cdot (R_i(\theta)(-\sin\theta, \cos\theta)+\varepsilon f'_i(\theta)(\cos\theta, \sin\theta))h_2(\alpha) d\alpha,\,&\\
		\end{array}
	\end{equation}
	
	\begin{equation}\label{3-17}
		\begin{split}
			&\partial_{g_j}\mathcal{F}_{i,1}( \varepsilon,\mathbf{x}, \bm f, \bm g)h_2\\
			=&\frac{\varepsilon}{R_i(\theta)}\int\!\!\!\!\!\!\!\!\!\; {}-{} \frac{(x_{i}-x_{j})\cdot \left[R_i(\theta)(-\sin\theta, \cos\theta)+\varepsilon f'_i(\theta) (\cos\theta, \sin\theta)\right] }{A_{ij}+\varepsilon B_{ij}(\theta,\alpha)} h_2(\alpha)d\alpha\\
			+&\frac{\varepsilon}{R_i(\theta)}\int\!\!\!\!\!\!\!\!\!\; {}-{} \frac{\varepsilon^2f'_i(\theta)R_i(\theta)-\varepsilon^2f'_i(\theta)R_j(\alpha)\cos(\theta-\alpha)+ \varepsilon R_i(\theta)R_j(\alpha)\sin(\theta-\alpha)}{A_{ij}+\varepsilon B_{ij}(\theta,\alpha)} h_2(\alpha)d\alpha\\
			-&\frac{2\pi\varepsilon}{R_i(\theta)}\int\!\!\!\!\!\!\!\!\!\; {}-{} \nabla H(z_i(\theta),z_j(\alpha))\cdot [R_i(\theta)(-\sin\theta, \cos\theta)+\varepsilon f'_i(\theta)(\cos\theta, \sin\theta)]h_2(\alpha) d\alpha\\
			=&O(\varepsilon),
		\end{split}
	\end{equation}	

	\begin{equation}\label{3-18}
		\begin{split}
			&\partial_{f_i}\tilde{\mathcal{F}}_{i,2}( \varepsilon,\mathbf{x}, \bm f, \bm g) h_1\\
			=&-\frac{2 \tilde{g}_{i,\varepsilon}(\theta)(R_i(\theta)h_1(\theta)+\varepsilon f_i'(\theta)h_1'(\theta))}{(R_i(\theta)^2+(R_i'(\theta))^2)^2}P.V.\int\!\!\!\!\!\!\!\!\!\; {}-{} \frac{\varepsilon f_i'(\theta)R_i(\alpha)\sin(\theta-\alpha)}{A(\theta, \alpha)+\varepsilon B(f_i,\theta,\alpha)} \tilde{g}_{i,\varepsilon}(\alpha)d\alpha\\ +&\frac{\tilde{g}_{i,\varepsilon}(\theta)}{R_i(\theta)^2+(R'_i(\theta))^2}P.V.\int\!\!\!\!\!\!\!\!\!\; {}-{} \frac{ (h_1'(\theta)R_i(\alpha)+\varepsilon f_i'(\theta) h_1(\alpha))\sin(\theta-\alpha)}{A(\theta, \alpha)+\varepsilon B(f_i,\theta,\alpha)} \tilde{g}_{i,\varepsilon}(\alpha)d\alpha\\ -&\frac{\tilde{g}_{i,\varepsilon}(\theta)}{R_i(\theta)^2+(R_i'(\theta))^2}P.V.\int\!\!\!\!\!\!\!\!\!\; {}-{} \frac{ f_i'(\theta)R_i(\alpha)\tilde{g}_{i,\varepsilon}(\alpha)\sin(\theta-\alpha)}{(A(\theta, \alpha)+\varepsilon B(f_i,\theta,\alpha))^2} \left[4(h_1(\theta)+h_1(\alpha))\sin^2\left(\frac{\theta-\alpha}{2}\right)\right.\\
			&\quad \left.+2\varepsilon (f_i(\theta)-f_i(\alpha))(h_1(\theta)-h_1(\alpha))+4\varepsilon\left(h_1(\theta)f_i(\alpha)+h_1(\alpha)f_i(\theta)\right)\sin^2\left(\frac{\theta-\alpha}{2}\right)\right]d\alpha\\
			-&\frac{2 \tilde{g}_{i,\varepsilon}(\theta)(R_i(\theta)h_1(\theta)+\varepsilon f_i'(\theta)h_1'(\theta))}{(R_i(\theta)^2+(R_i'(\theta))^2)^2}P.V.\int\!\!\!\!\!\!\!\!\!\; {}-{} \frac{R_i(\theta)R_i(\alpha)(\cos(\theta-\alpha)-1)}{A(\theta, \alpha)+\varepsilon B(f_i,\theta,\alpha)} \tilde{g}_{i,\varepsilon}(\alpha)d\alpha\\
			 +&\frac{\tilde{g}_{i,\varepsilon}(\theta)}{R_i(\theta)^2+(R_i'(\theta))^2}P.V.\int\!\!\!\!\!\!\!\!\!\; {}-{} \frac{( h_1(\theta)R_i(\alpha)+R_i(\theta) h_1(\alpha))(\cos(\theta-\alpha)-1)}{A(\theta, \alpha)+\varepsilon B(f_i,\theta,\alpha)} \tilde{g}_{i,\varepsilon}(\alpha)d\alpha\\
			 -&\frac{\tilde{g}_{i,\varepsilon}(\theta)}{\varepsilon(R_i(\theta)^2+(R_i'(\theta))^2)}P.V.\int\!\!\!\!\!\!\!\!\!\; {}-{} \frac{R_i(\theta)R_i(\alpha)\tilde{g}_{i,\varepsilon}(\alpha)(\cos(\theta-\alpha)-1)}{(A(\theta, \alpha)+\varepsilon B(f_i,\theta,\alpha))^2}\\ &\quad\times\left[4(h_1(\theta)+h_1(\alpha))\sin^2\left(\frac{\theta-\alpha}{2}\right)+2\varepsilon (f_i(\theta)-f_i(\alpha))(h_1(\theta)-h_1(\alpha))\right.\\
			 &\qquad\left.+4\varepsilon\left(h_1(\theta)f_i(\alpha)+h_1(\alpha)f_i(\theta)\right)\sin^2\left(\frac{\theta-\alpha}{2}\right)\right]d\alpha\\
			-&\frac{2 \tilde{g}_{i,\varepsilon}(\theta)(R_i(\theta)h_1(\theta)+\varepsilon f_i'(\theta)h'_1(\theta))}{(R_i(\theta)^2+(R_i'(\theta))^2)^2}P.V.\int\!\!\!\!\!\!\!\!\!\; {}-{} \frac{R_i(\theta)( f_i(\alpha)-f_i(\theta))}{A(\theta, \alpha)+\varepsilon B(f_i,\theta,\alpha)} \tilde{g}_{i,\varepsilon}(\alpha)d\alpha\\
			 +&\frac{\tilde{g}_{i,\varepsilon}(\theta)}{R_i(\theta)^2+(R'_i(\theta))^2}P.V.\int\!\!\!\!\!\!\!\!\!\; {}-{} \frac{\varepsilon h_1(\theta)( f_i(\alpha)-f_i(\theta))+R_i(\theta)( h_1(\alpha)-h_1(\theta))}{A(\theta, \alpha)+\varepsilon B(f_i,\theta,\alpha)} \tilde{g}_{i,\varepsilon}(\alpha)d\alpha\\
			 -&\frac{\tilde{g}_{i,\varepsilon}(\theta)}{R_i(\theta)^2+(R'_i(\theta))^2}P.V.\int\!\!\!\!\!\!\!\!\!\; {}-{} \frac{R_i(\theta)( f_i(\alpha)-f_i(\theta))\tilde{g}_{i,\varepsilon}(\alpha)}{(A(\theta, \alpha)+\varepsilon B(f_i,\theta,\alpha))^2} \left[4(h_1(\theta)+h_1(\alpha))\sin^2\left(\frac{\theta-\alpha}{2}\right)\right.\\
			&\quad \left.+2\varepsilon (f_i(\theta)-f_i(\alpha))(h_1(\theta)-h_1(\alpha))+4\varepsilon\left(h_1(\theta)f_i(\alpha)+h_1(\alpha)f_i(\theta)\right)\sin^2\left(\frac{\theta-\alpha}{2}\right)\right]d\alpha\\
			&+O(\varepsilon),
		\end{split}
	\end{equation}
	
	\begin{equation}\label{3-19}
		\partial_{f_j}\mathcal{F}_{i, 2}(\varepsilon,\mathbf{x}, \bm f, \bm g) h_1=O(\varepsilon),
	\end{equation}
	
	\begin{equation}\label{3-20}
		\begin{split}
			&\partial_{g_i}\tilde{\mathcal{F}}_{i,2}( \varepsilon,\mathbf{x}, \bm f,\bm  g)h_2\\
			=&\frac{ h_2(\theta)}{R_i(\theta)^2+(R_i'(\theta))^2}P.V.\int\!\!\!\!\!\!\!\!\!\; {}-{} \frac{\varepsilon f_i'(\theta)R_i(\alpha)\sin(\theta-\alpha)}{A(\theta, \alpha)+\varepsilon B(f_i,\theta,\alpha)} \tilde{g}_{i,\varepsilon}(\alpha)d\alpha\\
			 +&\frac{{\tilde{g}_{i,\varepsilon}(\theta)}}{R_i(\theta)^2+(R_i'(\theta))^2}P.V.\int\!\!\!\!\!\!\!\!\!\; {}-{} \frac{\varepsilon f_i'(\theta)R_i(\alpha)\sin(\theta-\alpha)}{A(\theta, \alpha)+\varepsilon B(f_i,\theta,\alpha)}  h_2(\alpha)d\alpha\\
			+&\frac{ h_2(\theta)}{R(\theta)^2+(R'(\theta))^2}P.V.\int\!\!\!\!\!\!\!\!\!\; {}-{} \frac{R_i(\theta)R_i(\alpha)(\cos(\theta-\alpha)-1)}{A(\theta, \alpha)+\varepsilon B(f_i,\theta,\alpha)} \tilde{g}_{i,\varepsilon}(\alpha)d\alpha\\
			 +&\frac{\tilde{g}_{i,\varepsilon}(\theta)}{R_i(\theta)^2+(R_i'(\theta))^2}P.V.\int\!\!\!\!\!\!\!\!\!\; {}-{} \frac{R_i(\theta)R_i(\alpha)(\cos(\theta-\alpha)-1)}{A(\theta, \alpha)+\varepsilon B(f_i,\theta,\alpha)}  h_2(\alpha)d\alpha\\
			+&\frac{\varepsilon h_2(\theta)}{R_i(\theta)^2+(R_i'(\theta))^2}P.V.\int\!\!\!\!\!\!\!\!\!\; {}-{} \frac{R_i(\theta)( f_i(\alpha)-f_i(\theta))}{A(\theta, \alpha)+\varepsilon B(f_i,\theta,\alpha)} \tilde{g}_{i,\varepsilon}(\alpha)d\alpha\\ +&\frac{\varepsilon\tilde{g}_{i,\varepsilon}(\theta)}{R_i(\theta)^2+(R_i'(\theta))^2}P.V.\int\!\!\!\!\!\!\!\!\!\; {}-{} \frac{R_i(\theta)( f_i(\alpha)-f_i(\theta))}{A(\theta, \alpha)+\varepsilon B(f_i,\theta,\alpha)} h_2(\alpha)d\alpha\\
			+&\sum_{j\not=i}\frac{\varepsilon h_2(\theta)}{R_i(\theta)^2+(R_i'(\theta))^2}\int\!\!\!\!\!\!\!\!\!\; {}-{} \frac{(x_{i,\varepsilon}-x_{j,\varepsilon})^\perp\cdot \left[R_i(\theta)(-\sin\theta, \cos\theta)+\varepsilon f'_i(\theta) (\cos\theta, \sin\theta)\right] }{A_{ij}+\varepsilon B_{ij}(\theta,\alpha)} \tilde{g}_{j,\varepsilon}(\alpha)d\alpha\\
			+&\sum_{j\not=i}\frac{\varepsilon h_2(\theta)}{R_i(\theta)^2+(R_i'(\theta))^2}\int\!\!\!\!\!\!\!\!\!\; {}-{} \frac{-\varepsilon R_i^2(\theta)+\varepsilon^2f'_i(\theta)R_j(\alpha)\sin(\theta-\alpha)+ \varepsilon R_i(\theta)R_j(\alpha)\cos(\theta-\alpha)}{A_{ij}+\varepsilon B_{ij}(\theta,\alpha)} \tilde{g}_{j,\varepsilon}(\alpha)d\alpha\\
			-&\sum_{j=1}^m  \frac{2\pi\varepsilon h_2(\theta)}{R_i(\theta)^2+(R_i'(\theta))^2}\int\!\!\!\!\!\!\!\!\!\; {}-{} \nabla^\perp H(z_i(\theta),z_j(\alpha))\cdot (R_i(\theta)(-\sin\theta, \cos\theta)+\varepsilon f'_i(\theta)(\cos\theta, \sin\theta))\\
			&\quad\times\tilde{g}_{j,\varepsilon}(\alpha) d\alpha\\
			-& \frac{2\pi\varepsilon\tilde{g}_{i,\varepsilon}(\theta)}{R_i(\theta)^2+(R_i'(\theta))^2}\int\!\!\!\!\!\!\!\!\!\; {}-{} \nabla^\perp H(z_i(\theta),z_i(\alpha))\cdot (R_i(\theta)(-\sin\theta, \cos\theta)+\varepsilon f'_i(\theta)(\cos\theta, \sin\theta)) h_2(\alpha) d\alpha,
		\end{split}
	\end{equation}

	and
	
	\begin{equation}\label{3-21}
		\begin{array}{ll}
\partial_{g_j}\tilde{\mathcal{F}}_{i,2}( \varepsilon,\mathbf{x}, \bm f, \bm g)h_2\,&\\
			 \qquad\,=\frac{\varepsilon\tilde{g}_{i,\varepsilon}(\theta)}{R_i(\theta)^2+(R_i'(\theta))^2}\Big\{\int\!\!\!\!\!\!\!\!\!\; {}-{} \frac{(x_{i}-x_{j})^\perp\cdot \left[R_i(\theta)(-\sin\theta, \cos\theta)+\varepsilon f'_i(\theta) (\cos\theta, \sin\theta)\right] }{A_{ij}+\varepsilon B_{ij}(\theta,\alpha)} h_2(\alpha)d\alpha\,&\\
\qquad\quad\,+\int\!\!\!\!\!\!\!\!\!\; {}-{} \frac{-\varepsilon R_i^2(\theta)+\varepsilon^2f'_i(\theta)R_j(\alpha)\sin(\theta-\alpha)+ \varepsilon R_i(\theta)R_j(\alpha)\cos(\theta-\alpha)}{A_{ij}+\varepsilon B_{ij}(\theta,\alpha)}h_2(\alpha)d\alpha\,&\\
\qquad\quad\,- 2\pi\int\!\!\!\!\!\!\!\!\!\; {}-{} \nabla^\perp H(z_i(\theta),z_j(\alpha))\cdot [R_i(\theta)(-\sin\theta, \cos\theta)+\varepsilon f'_i(\theta)(\cos\theta, \sin\theta)]h_2(\alpha) d\alpha\Big\}\,&\\
\qquad\,=O(\varepsilon).\,&\\
		\end{array}
	\end{equation}		
\end{proof}

\section{Linearization and isomorphism}
In this section, we study the linearization of the functionals defined in Section \ref{2}. Denote $\bm{\mathcal{F}}_i:=(\mathcal{F}_{i,1}, \mathcal{F}_{i,2})$ and $\bm{\mathcal{F}}:=(\bm{\mathcal{F}}_1,\ldots, \bm{\mathcal{F}}_m)$.

By \eqref{3-8} and \eqref{3-10}, one can check that $(0,\mathbf{x}, 0,0)$ is a solution to $\bm {\mathcal{F}}=0$ if and only if $\mathbf{x}$ is a critical point of $\mathcal{W}_m$. Now, we take $\mathbf{x}_0$ to be a critical point of $\mathcal{W}_m$, and hence $(0,\mathbf{x}_0, 0,0)$ is a solution to $\bm{\mathcal{F}}=0$. We study the linearization of $\bm{\mathcal{F}}$ at $(0,\mathbf{x}_0, 0,0)$.

According to \eqref{3-14}-\eqref{3-21} at the end of the proof of Proposition \ref{p3-2}, when $\varepsilon=0$ and $\bm f, \bm g\equiv 0$, for all $i=1,\ldots,m$, the Gateaux derivatives are
\begin{equation}\label{4-1}
\left\{
	\begin{split}
		&\partial_{f_i}\mathcal{F}_{i,1}(0,\mathbf{x},  0,0) f=\frac{\kappa_i}{2}f'(\theta),\\
		&\partial_{f_j}\mathcal{F}_{i,1}(0,\mathbf{x},  0,0) f=0,\,\,\,j\not=i\\
		&\partial_{g_i}\mathcal{F}_{i,1}(0,\mathbf{x},  0,0) g=\int\!\!\!\!\!\!\!\!\!\; {}-{}\frac{g(\alpha)\sin(\theta-\alpha)}{4\sin^2\left(\frac{\theta-\alpha}{2}\right)}d\alpha,\\
		&\partial_{g_j}\mathcal{F}_{i,1}(0,\mathbf{x},  0,0) g=0,\,\,\,j\not=i\\
		&\partial_{f_i}\mathcal{F}_{i,2}(0,\mathbf{x},  0,0) f=\kappa_i^2f(\theta)-\kappa_i^2\int\!\!\!\!\!\!\!\!\!\; {}-{}\frac{f(\theta)-f(\alpha)}{4\sin^2\left(\frac{\theta-\alpha}{2}\right)}d\alpha,\\
		&\partial_{f_j}\mathcal{F}_{i,2}(0,\mathbf{x},  0,0) f=0,\,\,\,j\not=i\\
		&\partial_{g_i}\mathcal{F}_{i,2}(0,\mathbf{x},  0,0) g=-\frac{\kappa_i}{2}g(\theta),\\
		&\partial_{g_j}\mathcal{F}_{i,2}(0,\mathbf{x},  0,0) g=0\,\,\,j\not=i.
	\end{split}	
\right.
\end{equation}

Taking $(h_1,h_2)\in X^{k+1}\times X^k$, where
\begin{equation}\label{4-2}
	h_1(\theta)=\sum_{j=1}^\infty( a_j \cos(j\theta)+b_j\sin(j\theta)) \ \ \ \text{and} \ \ \ h_2(\theta)=\sum_{j=1}^\infty( c_j \cos(j\theta)+d_j\sin(j\theta)),
\end{equation}
we will prove that the linearization of $\bm {\mathcal{F}}_i$ at $(0,\mathbf{x}_0,0,0)$ has the following Fourier series form
\begin{equation}\label{4-3}
	\begin{split}
	 D_{(f_i,g_i)} \bm {\mathcal{F}}_i(0,\mathbf{x}_0,0,0) (h_1, h_2)=&\begin{pmatrix}
		\partial_{f_i}\mathcal{F}_{i,1}(0,\mathbf{x}_0,  0,0) h_1 + \partial_{g_i}\mathcal{F}_{i,1}(0,\mathbf{x}_0,  0,0) h_2\\
		\partial_{f_i}\mathcal{F}_{i,2}(0,\mathbf{x}_0,  0,0) h_1 + \partial_{g_i}\mathcal{F}_{i,2}(0,\mathbf{x}_0,  0,0) h_2
	\end{pmatrix}\\
	=&\sum_{j=1}^\infty\begin{pmatrix}  \hat a_j \sin(j\theta) +\hat b_j \cos(j\theta)\\  \hat c_j \cos(j\theta)+\hat d_j \sin(j\theta)\end{pmatrix},
	\end{split}
\end{equation}
where
\begin{equation*}
	\begin{pmatrix} \hat a_j  \\ \hat c_j \end{pmatrix}=M_j  \begin{pmatrix} a_j  \\  c_j \end{pmatrix} \quad\text{and}\,\,\begin{pmatrix} \hat b_j  \\ \hat d_j \end{pmatrix}=N_j  \begin{pmatrix} b_j  \\  d_j \end{pmatrix}
\end{equation*}
with $M_j$ and $N_j$ two $2\times 2$ matrices  given in Lemma \ref{l4-2}.

To compute $M_j$ and $N_j$, we need the following identities.

\begin{lemma}\label{l4-1}
	 For all $j\ge1$ and $j\in \mathbb N^*$, there hold
	\begin{equation}\label{4-4}
		\int\!\!\!\!\!\!\!\!\!\; {}-{}\frac{\cos(j\alpha)\sin(\theta-\alpha)}{4\sin^2\left(\frac{\theta-\alpha}{2}\right)}d\alpha
=\frac{1}{2}\sin(j\theta),
	\end{equation}
	\begin{equation}\label{4-5}
		\int\!\!\!\!\!\!\!\!\!\; {}-{}\frac{\sin(j\alpha)\sin(\theta-\alpha)}{4\sin^2\left(\frac{\theta-\alpha}{2}\right)}d\alpha
=-\frac{1}{2}\cos(j\theta),
	\end{equation}
	\begin{equation}\label{4-6}
		\int\!\!\!\!\!\!\!\!\!\; {}-{}\frac{\cos(j\theta)-\cos(j\alpha)}{4\sin^2\left(\frac{\theta-\alpha}{2}\right)}d\alpha
=\frac{j}{2}\cos(j\theta),
	\end{equation}
	\begin{equation}\label{4-7}
		\int\!\!\!\!\!\!\!\!\!\; {}-{}\frac{\sin(j\theta)-\sin(j\alpha)}{4\sin^2\left(\frac{\theta-\alpha}{2}\right)}d\alpha
=\frac{j}{2}\sin(j\theta).
	\end{equation}
\end{lemma}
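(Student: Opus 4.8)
The plan is to read all four identities off the Fourier action of two classical convolution operators on the circle: the conjugate function (periodic Hilbert transform) and $(-\Delta)^{1/2}$.

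\emph{Simplifying the kernel.} From $\sin(\theta-\alpha)=2\sin\frac{\theta-\alpha}{2}\cos\frac{\theta-\alpha}{2}$ one has
\[
\frac{\sin(\theta-\alpha)}{4\sin^2\left(\frac{\theta-\alpha}{2}\right)}=\frac12\cot\!\left(\frac{\theta-\alpha}{2}\right),
\qquad
\frac{1}{4\sin^2\left(\frac{\theta-\alpha}{2}\right)}=\partial_\alpha\!\left[\frac12\cot\!\left(\frac{\theta-\alpha}{2}\right)\right].
\]
Thus the integral on the left of \eqref{4-4} (resp. \eqref{4-5}) is the value at $\cos(j\cdot)$ (resp. $\sin(j\cdot)$) of the conjugate-function operator $h\mapsto\frac1{2\pi}P.V.\int_0^{2\pi}h(\alpha)\cot\frac{\theta-\alpha}{2}\,d\alpha$, and \eqref{4-6}, \eqref{4-7} will follow from \eqref{4-4}, \eqref{4-5} by integrating by parts in $\alpha$ via the second identity.

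\emph{Proof of \eqref{4-4} and \eqref{4-5}.} Substituting $\beta=\theta-\alpha$ and using $2\pi$-periodicity, the integral in \eqref{4-4} becomes $\frac1{4\pi}P.V.\int_{-\pi}^{\pi}\cos\!\big(j(\theta-\beta)\big)\cot(\beta/2)\,d\beta$; expanding $\cos(j(\theta-\beta))=\cos(j\theta)\cos(j\beta)+\sin(j\theta)\sin(j\beta)$, the term carrying the odd function $\cos(j\beta)\cot(\beta/2)$ drops out in the principal value, leaving $\sin(j\theta)\cdot\frac1{4\pi}P.V.\int_{-\pi}^{\pi}\sin(j\beta)\cot(\beta/2)\,d\beta$. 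One is then reduced to the classical fact $\frac1{2\pi}P.V.\int_{-\pi}^{\pi}\sin(j\beta)\cot(\beta/2)\,d\beta=1$ for all $j\ge1$, which I would obtain from the conjugate Dirichlet-kernel identity $\frac12\cot(\beta/2)=\sum_{k=1}^{n}\sin(k\beta)+\frac{\cos((n+\frac12)\beta)}{2\sin(\beta/2)}$: the finite sum contributes $\frac12$ by orthogonality once $n\ge j$, and the remainder tends to $0$ by the Riemann--Lebesgue lemma since $\sin(j\beta)/(2\sin(\beta/2))$ is smooth and $2\pi$-periodic. This gives \eqref{4-4}; \eqref{4-5} is the same computation with the roles of the $\sin$ and $\cos$ parts interchanged. (Equivalently one may simply invoke that the periodic Hilbert transform sends $\cos(j\theta)\mapsto\sin(j\theta)$ and $\sin(j\theta)\mapsto-\cos(j\theta)$.)

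\emph{Proof of \eqref{4-6} and \eqref{4-7}, and the main difficulty.} Because $\cos(j\theta)-\cos(j\alpha)$ and $\sin(j\theta)-\sin(j\alpha)$ vanish to first order at $\alpha=\theta$, their products with $\partial_\alpha[\frac12\cot\frac{\theta-\alpha}{2}]$ are only principal-value singular, so an integration by parts in $\alpha$ is admissible and the boundary terms cancel by $2\pi$-periodicity. Carrying it out rewrites the integral in \eqref{4-6} as $-j$ times that in \eqref{4-5}, and the one in \eqref{4-7} as $+j$ times that in \eqref{4-4}; with the previous step this yields $\frac{j}{2}\cos(j\theta)$ and $\frac{j}{2}\sin(j\theta)$. (Equivalently, the operator $h\mapsto\frac1{2\pi}P.V.\int_0^{2\pi}\frac{h(\theta)-h(\alpha)}{4\sin^2((\theta-\alpha)/2)}\,d\alpha$ equals $\frac12(-\Delta)^{1/2}$, which one checks on $e^{ij\theta}$ by writing $1-e^{ij\beta}=2\sin^2(j\beta/2)-i\sin(j\beta)$, discarding the odd part in the principal value, and using $\frac1{2\pi}\int_{-\pi}^{\pi}\sin^2(j\beta/2)/\sin^2(\beta/2)\,d\beta=j$.) There is no real obstacle here: the only point requiring care is the bookkeeping of the principal value --- justifying the integration by parts and the odd-part cancellations above --- which is routine given the first-order vanishing of the numerators on the diagonal.
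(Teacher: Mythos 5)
Your proof is correct and follows essentially the same route as the paper: both rest on identifying the kernel $\frac{\sin(\theta-\alpha)}{4\sin^2\left(\frac{\theta-\alpha}{2}\right)}=\frac12\cot\left(\frac{\theta-\alpha}{2}\right)$ with the periodic Hilbert transform and the difference-quotient operator with $\frac12(-\Delta)^{\frac{1}{2}}$, then evaluating these on $\cos(j\theta)$ and $\sin(j\theta)$. The only cosmetic differences are that you verify the classical transform identities self-containedly (conjugate Dirichlet and Fej\'er kernels) where the paper cites Lemma A.8 of \cite{GPSY2}, and you pass between the paired identities by integration by parts in $\alpha$, whereas the paper obtains \eqref{4-5} and \eqref{4-7} by differentiating \eqref{4-4} and \eqref{4-6} in $\theta$.
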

\begin{proof}
	Identities \eqref{4-4} and \eqref{4-6} were already proved in Lemma A.8.  \cite{GPSY2}. Indeed, \eqref{4-4} can be deduced from the identity
	\begin{equation*}
		\int\!\!\!\!\!\!\!\!\!\; {}-{}\frac{\cos(j\alpha)\sin(\theta-\alpha)}{4\sin^2\left(\frac{\theta-\alpha}{2}\right)}d\alpha=\frac{1}{2}\int\!\!\!\!\!\!\!\!\!\; {}-{}\cos(j\alpha)\cot\left(\frac{\theta-\alpha}{2}\right)d\alpha=\frac{1}{2} \mathcal H(\cos(j\theta))(\theta),
	\end{equation*}
	where $\mathcal H(\cdot)$ is the Hilbert transform on torus and hence $H(\cos(j\theta))=\sin({j\theta})$.
	\eqref{4-6} can be obtained by computing the fractional Laplacians
	\begin{equation*}
		\int\!\!\!\!\!\!\!\!\!\; {}-{}\frac{\cos(j\theta)-\cos(j\alpha)}{4\sin^2\left(\frac{\theta-\alpha}{2}\right)}d\alpha=\frac{1}{2}(-\Delta)^{\frac{1}{2}}\cos(j\theta)=\frac{j}{2}\cos(j\theta).
	\end{equation*}
Finally, we point out that the identities \eqref{4-5} and \eqref{4-7} can be derived by calculating derivatives of \eqref{4-4} and \eqref{4-6} respectively.

\end{proof}

Now, we can prove \eqref{4-3} and find the explicit formula for $M_j$ and $N_j$.

\begin{lemma}\label{l4-2}
	The derivative of $\bm{\mathcal{F}}_i$ at $(0,\mathbf{x}_0,0,0)$ is given by \eqref{4-3} with
	\begin{equation}\label{4-8}
		M_j=\begin{pmatrix} - \frac{\kappa_i j}{2} & \frac{1}{2} \\ \frac{(2-j)\kappa_i^2}{2} & -\frac{\kappa_i}{2} \end{pmatrix}, \quad N_j=\begin{pmatrix}  \frac{\kappa_i j}{2} & -\frac{1}{2} \\ \frac{(2-j)\kappa_i^2}{2} & -\frac{\kappa_i}{2} \end{pmatrix}
	\end{equation}
    for any $j\geq 1$.

	Moreover, $D_{(f_i, g_i)}\bm{\mathcal{F}}_i(0,\mathbf{x}_0,0,0)$ is an isomorphism from $X^k_i$ to $Y^k_i$ and $D_{(\bm f, \bm g)}\bm{\mathcal{F}}(0,\mathbf{x}_0,0,0)$ is an isomorphism from $\mathcal X^k$ to $\mathcal Y^k$.
\end{lemma}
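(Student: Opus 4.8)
The plan is to reduce the statement to an explicit mode-by-mode computation. First I would insert the Fourier ansatz \eqref{4-2} into the linearized operators listed in \eqref{4-1}. From $\partial_{f_i}\mathcal{F}_{i,1}(0,\mathbf{x}_0,0,0)h_1=\tfrac{\kappa_i}{2}h_1'$ the mode $a_j\cos(j\theta)$ feeds $-\tfrac{\kappa_i j}{2}a_j\sin(j\theta)$ into the first output component and $b_j\sin(j\theta)$ feeds $\tfrac{\kappa_i j}{2}b_j\cos(j\theta)$; applying \eqref{4-4}--\eqref{4-5} to $\partial_{g_i}\mathcal{F}_{i,1}$ adds $\tfrac12 c_j\sin(j\theta)$ and $-\tfrac12 d_j\cos(j\theta)$; and applying \eqref{4-6}--\eqref{4-7} to $\partial_{f_i}\mathcal{F}_{i,2}$ together with $\partial_{g_i}\mathcal{F}_{i,2}(0,\mathbf{x}_0,0,0)h_2=-\tfrac{\kappa_i}{2}h_2$ produces the second output component, mode $j$ of which contributes $\tfrac{(2-j)\kappa_i^2}{2}a_j\cos(j\theta)-\tfrac{\kappa_i}{2}c_j\cos(j\theta)$ and the analogous $\sin$ terms. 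Collecting the $\sin(j\theta)$- and $\cos(j\theta)$-coefficients of the two components into $(\hat a_j,\hat c_j)$ and $(\hat b_j,\hat d_j)$ yields \eqref{4-3} and identifies $M_j$, $N_j$ as in \eqref{4-8}. This step is mechanical once Lemma \ref{l4-1} is in hand.

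Next I would invert the $2\times2$ systems mode by mode. A direct computation gives $\det M_j=\tfrac{\kappa_i^2(j-1)}{2}$ and $\det N_j=\tfrac{\kappa_i^2(1-j)}{2}$; since $\kappa_i\neq0$, both are nonzero for every $j\ge2$, while $M_1$ and $N_1$ have rank one. For $j\ge 2$ one writes out $M_j^{-1}$, $N_j^{-1}$ explicitly and observes that their first rows have entries of size $O(1/j)$ and their second rows entries of size $O(1)$: this is precisely the gain of one derivative that maps the inverse image back into $X^{k+1}\times X^k$, and by Plancherel it gives, on the span of the modes $j\ge2$, the bound $\|h_1\|_{H^{k+1}}+\|h_2\|_{H^k}\le C(\|F_1\|_{H^k}+\|F_2\|_{H^k})$. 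Boundedness of $D_{(f_i,g_i)}\bm{\mathcal{F}}_i(0,\mathbf{x}_0,0,0)$ itself is inherited from Proposition \ref{p3-2}, or seen directly since it is built from $\partial_\theta$, the Hilbert transform and $(-\Delta)^{1/2}$, all bounded from $X^{k+1}\times X^k$ into $X^k\times X^k$.

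The one delicate point — and the reason the constrained spaces $X^k_i,Y^k_i$ appear — is the degenerate mode $j=1$. Here I would check that $\ker M_1=\ker N_1=\mathrm{span}\{(1,\kappa_i)\}$, that $\mathrm{range}\,M_1=\{(\hat a_1,\hat c_1):\hat c_1=-\kappa_i\hat a_1\}$ and $\mathrm{range}\,N_1=\{(\hat b_1,\hat d_1):\hat d_1=\kappa_i\hat b_1\}$, and then rephrase the definitions of $X^k_i$ and $Y^k_i$ in Fourier terms: membership in $X^k_i$ forces $c_1=-\kappa_i a_1$, $d_1=-\kappa_i b_1$ on the first mode, which is complementary to $\mathrm{span}\{(1,\kappa_i)\}$, so the operator is injective on the two-dimensional first-mode slice of $X^k_i$; conversely the relations $\hat c_1=-\kappa_i\hat a_1$, $\hat d_1=\kappa_i\hat b_1$ defining $Y^k_i$ on the first mode are exactly $\mathrm{range}\,M_1$ and $\mathrm{range}\,N_1$, giving surjectivity onto that slice, with a bounded inverse automatic by finite-dimensionality. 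The same two relations also show that the full operator maps all of $X^{k+1}\times X^k$, hence $X^k_i$, into $Y^k_i$, so the target space is correctly chosen.

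Assembling the pieces: given $(F_1,F_2)\in Y^k_i$, set $(a_j,c_j)=M_j^{-1}(\hat a_j,\hat c_j)$ and $(b_j,d_j)=N_j^{-1}(\hat b_j,\hat d_j)$ for $j\ge2$, take the unique first-mode preimage lying in the $X^k_i$-slice (unique by the transversality above), and sum; the coefficient bounds give $(h_1,h_2)\in X^k_i$ with the asserted estimate, so $D_{(f_i,g_i)}\bm{\mathcal{F}}_i(0,\mathbf{x}_0,0,0)$ is a Banach-space isomorphism $X^k_i\to Y^k_i$. Finally \eqref{4-1} shows that every cross-derivative $\partial_{f_j}\mathcal{F}_{i,\cdot}$, $\partial_{g_j}\mathcal{F}_{i,\cdot}$ with $j\neq i$ vanishes at $(0,\mathbf{x}_0,0,0)$, so $D_{(\bm f,\bm g)}\bm{\mathcal{F}}(0,\mathbf{x}_0,0,0)$ is block-diagonal and is therefore an isomorphism $\mathcal X^k\to\mathcal Y^k$. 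I expect the main obstacle to be precisely the $j=1$ bookkeeping — verifying that the constraint defining $X^k_i$ is complementary to $\ker M_1$ (resp.\ $\ker N_1$) and that the constraint defining $Y^k_i$ coincides with $\mathrm{range}\,M_1$ (resp.\ $\mathrm{range}\,N_1$); everything else is routine Fourier-multiplier analysis.
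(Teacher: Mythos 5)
Your proposal is correct and follows essentially the same route as the paper: the mode-by-mode computation of $M_j$, $N_j$ via Lemma \ref{l4-1}, the explicit inversion for $j\ge 2$ with the $O(1/j)$ first-row entries supplying the extra derivative needed to land in $X^{k+1}\times X^k$, the treatment of the degenerate mode $j=1$ through the constraints defining $X^k_i$ and $Y^k_i$ (your kernel/range bookkeeping is exactly what the paper encodes by writing the explicit first-mode preimage), and the block-diagonal structure from \eqref{4-1} for the full operator. No gaps.
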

\begin{proof}
	Using \eqref{4-1}, \eqref{4-2} and Lemma \ref{l4-1}, we obtain by direct calculations
	\begin{equation*}
		\partial_{f_i}\mathcal{F}_{i,1}(0,\mathbf{x}_0,  0,0) h_1=\sum_{j=1}^\infty\left(\frac{-\kappa_ij}{2} a_j \sin(j\theta)+\frac{\kappa_ij}{2} b_j \cos(j\theta)\right),
	\end{equation*}
	\begin{equation*}
		\partial_{g_i}\mathcal{F}_{i,1}(0,\mathbf{x}_0,  0,0) h_2=\frac{1}{2}\sum_{j=1}^\infty (c_j\sin(j\theta)-d_j\cos(j\theta)),
	\end{equation*}		
	
	\begin{equation*}
		\begin{array}{ll}
\partial_{f_i}\mathcal{F}_{i,2}(0,\mathbf{x}_0,  0,0)h_1&\\
\qquad\qquad\,=\kappa_i^2\sum_{j=1}^\infty( a_j \cos(j\theta)+b_j\sin(j\theta))-\kappa_i^2\sum_{j=1}^\infty\left( \frac{j}{2} a_j \cos(j\theta)+\frac{j}{2}b_j\sin(j\theta)\right)&\\
\qquad\qquad\,=\sum_{j=1}^\infty\left( \frac{\kappa_i^2(2-j)}{2} a_j \cos(j\theta)+\frac{\kappa_i^2(2-j)}{2}b_j\sin(j\theta)\right),&\\
		\end{array}	
	\end{equation*}
	and
	\begin{equation*}
		\partial_{g_i}\mathcal{F}_{i,2}(0,\mathbf{x}_0,  0,0)h_2=-\frac{\kappa_i}{2}\sum_{j=1}^\infty( c_j \cos(j\theta)+d_j\sin(j\theta)).
	\end{equation*}	
	Then, one can easily check that the derivative of $\bm{\mathcal{F}}_i$ at $(0,\mathbf{x},0,0)$ is given by \eqref{4-3} with
	$$M_j=\begin{pmatrix} - \frac{\kappa_i j}{2} & \frac{1}{2} \\ \frac{(2-j)\kappa_i^2}{2} & -\frac{\kappa_i}{2} \end{pmatrix},\,\,N_j=\begin{pmatrix}  \frac{\kappa_i j}{2} & -\frac{1}{2} \\ \frac{(2-j)\kappa_i^2}{2} & -\frac{\kappa_i}{2} \end{pmatrix}.$$
	
	Now we are going to prove that $D_{(f_i,g_i)} \bm{\mathcal{F}}_i(0,\mathbf{x}_0,0,0)$ is an isomorphism from $X^k_i$ to $Y^k_i$. Recall the definition of $X^k_i$ and $Y^k_i$ given at the end of Section \ref{2}. From the above calculations, one has  $M_1=\begin{pmatrix} -\kappa_i/2 & 1/2 \\ \kappa_i^2/2 & -\kappa_i/2 \end{pmatrix}$ and $N_1=\begin{pmatrix} \kappa_i/2 & -1/2 \\ \kappa_i^2/2 & -\kappa_i/2 \end{pmatrix}$, then it is obvious that $D_{(f_i,g_i)} \bm{\mathcal{F}}_i(0,\mathbf{x}_0,0,0)$ maps $X^k_i$ to $Y^k_i$. Hence only the invertibility needs to be considered.
	
	For $j\ge 2$, $\det(M_j)=-\det(N_j)=\frac{\kappa_i^2(j-1)}{2}>0$ which implies that $M_j$ and $N_j$ are invertible, and their inverse are given by
	
	\begin{equation}\label{4-9}
		M_j^{-1}=\begin{pmatrix} \frac{-1}{\kappa_i(j-1)} & \frac{-1}{\kappa_i^2(j-1)} \\ \frac{j-2}{j-1} & \frac{-j}{\kappa_i(j-1)} \end{pmatrix}, \ \ \ \forall \, j\ge 2.
	\end{equation}
	and
	\begin{equation}\label{4-10}
		N_j^{-1}=\begin{pmatrix} \frac{1}{\kappa_i(j-1)} & \frac{-1}{\kappa_i^2(j-1)} \\ \frac{2-j}{j-1} & \frac{-j}{\kappa_i(j-1)} \end{pmatrix}, \ \ \ \forall \, j\ge 2.
	\end{equation}
	Thus for any $(u,v)\in  Y_i^k$ with
	\begin{equation*}
		u=\sum_{j=1}^\infty p_j\sin(j\theta) +q_j\cos(j\theta)\ \ \ \text{and} \ \ \ v=-\kappa_i p_1\cos(\theta)+\kappa_iq_1\sin(\theta)+\sum_{j=2}^\infty r_j\cos(j\theta)+s_j\sin(j\theta),
	\end{equation*}
	we can write $D_{(f_i,g_i)}\bm{\mathcal{F}}_i(0,\mathbf{x}_0,0,0)^{-1}(u,v)$ as
	\begin{equation*}
		D_{(f_i,g_i)}\bm{\mathcal{F}}_i(0,\mathbf{x}_0,0,0)^{-1}(u,v)=\begin{pmatrix}  -\frac{p_1}{\kappa_i} \cos(\theta)+\frac{q_1}{\kappa} \sin(\theta)\\  p_1 \cos(\theta)-q_1\sin(\theta)\end{pmatrix}+\sum_{j=2}^\infty M_j^{-1}  \begin{pmatrix} p_j  \\  r_j \end{pmatrix}\cos(j\theta)+ N_j^{-1}  \begin{pmatrix} q_j  \\  s_j \end{pmatrix}\sin(j\theta).
	\end{equation*}
	Denote
	\begin{equation*}
		\begin{pmatrix} \tilde p_j  \\ \tilde r_j \end{pmatrix}=M_j^{-1}  \begin{pmatrix} p_j  \\  r_j \end{pmatrix}, \ \ \	\begin{pmatrix} \tilde q_j  \\ \tilde s_j \end{pmatrix}=N_j^{-1}  \begin{pmatrix} q_j  \\  s_j \end{pmatrix},\ \ \  \forall \, j\ge 2.
	\end{equation*}
	From \eqref{4-9} and \eqref{4-10}, we have the asymptotic behavior: $\tilde p_j=O(j^{-1}(|p_j|+|r_j|))$, $\tilde r_j=O( |p_j|+|r_j|)$, $\tilde q_j=O(j^{-1}(|q_j|+|s_j|))$ and $\tilde s_j=O( |q_j|+|s_j|)$ as $j\to +\infty$, which implies that  $D_{(f_i,g_i)}\bm{\mathcal{F}}_i(0,\mathbf{x},0,0)^{-1}(u,v)$ does belong to $ X^k_i$.
	
	Notice that by \eqref{4-1}, we have  $\partial_{f_j}\mathcal{F}_{i,1}(0,\mathbf{x}_0,  0,0) h_1$, $\partial_{g_j}\mathcal{F}_{i,1}(0,\mathbf{x}_0,  0,0) h_2$, $\partial_{f_j}\mathcal{F}_{i,2}(0,\mathbf{x}_0,  0,0) h_1$, $\partial_{g_j}\mathcal{F}_{i,2}(0,\mathbf{x}_0,  0,0) h_2=0,\,j\not=i$. Therefore, we find $$D_{(\bm f,\bm g)}\bm{\mathcal{F}}(0,\mathbf{x}_0,  0,0)=\text{diag}\left(D_{(f_1,g_1)}\bm{\mathcal{F}}_1(0,\mathbf{x}_0,  0,0),\cdots, D_{(f_m,g_m)}\bm{\mathcal{F}}_m(0,\mathbf{x}_0,  0,0)\right),$$ and hence $D_{(\bm f,\bm g)}\bm{\mathcal{F}}(0,\mathbf{x}_0,  0,0)$ is an isomorphism from $\mathcal X^k$ to $\mathcal Y^k$.
	
	The proof of this lemma is completed.
	
\end{proof}

\section{Existence of vortex sheets}
In this section, inspired by the classical Crandall-Rabinowitz theorem on bifurcation theory, we use the implicit function theorem to obtain a branch of solutions for arbitrary fixed small $\varepsilon$.

From the previous sections, we know that $(0,\mathbf{x}_0, 0,0)$ is a solution to $\bm{\mathcal{F}}=0$ if and only if $\mathbf{x}_0$ is a critical point of $\mathcal{W}_m$.  Moreover,  $D_{(\bm f,\bm g)}\bm{\mathcal{F}}(0,\mathbf{x}_0, 0,0)$ is an isomorphism from $\mathcal X^k$ to $\mathcal Y^k$. It can be seen from Lemma \ref{l4-2} that the kernel of $D_{(\bm f, \bm g)}\bm{\mathcal{F}}(0,\mathbf{x}_0,0,0)$ in $\left(X^{k+1}\times X^k\right)^m$ is  $$\mathcal{X}^k_0:=\prod_{i=1}^m \text{span}\{(a_1\cos(\theta)+b_1\sin(\theta), \kappa_i(a_1\cos(\theta)+b_1\sin(\theta)))\}.$$
We take arbitrary nontrivial $(\bm f_0, \bm g_0)\in \mathcal{X}^k_0$ and define the following new functional
\begin{equation}\label{5-1}
	\overline{\bm{\mathcal{F}}}(\varepsilon, \tau, \mathbf{x}, \bm f, \bm g):= \bm{\mathcal{F}}(\varepsilon, \mathbf{x}, \bm f +\tau\bm{f}_0, \bm g+ \tau\bm{g}_0),
\end{equation}
where $\mathbf{x}_{\varepsilon,\tau}$ is closed to $\mathbf x$, the given critical point of $\mathcal{W}_m$, and will be determined later.

To apply the implicit function theorem, we need to make sure that $\overline{\bm{\mathcal{F}}}$ map from $\mathcal X^k$ to $\mathcal Y^k$. This will be achieved by careful choice of  $\mathbf{x}$. Indeed, taking $V_1\subset \mathcal X^k$ be the unit ball, we have the following key proposition.

\begin{proposition}\label{p5-1}
	The condition  that $\overline{\bm{\mathcal{F}}}$ maps from $(-\varepsilon_0, \varepsilon_0)\times(-\tau_1, \tau_1)\times B_{r_0}(\mathbf{x}_0)\times V_1$ to $\mathcal Y^k$ is equivalent to a equation of the form
	\begin{equation}\label{5-2}
		\nabla \mathcal{ W}_m(\mathbf{x})=O_{\tau_1}(\varepsilon),
	\end{equation}
	where $\tau_1$ is any fixed small positive number and $O_{\tau_1}(\varepsilon)$ means a vector that is of the order $\varepsilon$ up to a constant depending on $\tau_1$.
\end{proposition}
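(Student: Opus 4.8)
The plan is to translate ``$\overline{\bm{\mathcal F}}\in\mathcal Y^k$'' into finitely many scalar equations on first Fourier coefficients, substitute the expansions of Section 3, and watch the Kirchhoff--Routh gradient emerge after the cancellations forced by the structure of the equations.

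\emph{Reduction to $2m$ scalar equations.} By Proposition \ref{p3-1} the components $\overline{\mathcal F}_{i,1}=\mathcal F_{i,1}(\varepsilon,\mathbf x,\bm f+\tau\bm f_0,\bm g+\tau\bm g_0)$ and $\overline{\mathcal F}_{i,2}=\mathcal F_{i,2}(\varepsilon,\mathbf x,\bm f+\tau\bm f_0,\bm g+\tau\bm g_0)$ already lie in $X^k$ (the second because of the projector $I-P_0$ built into \eqref{2-5}). Hence $\overline{\bm{\mathcal F}}$ is $\mathcal Y^k$--valued precisely when, for each $i$, the two linear functionals defining $Y^k_i$ annihilate $(\overline{\mathcal F}_{i,1},\overline{\mathcal F}_{i,2})$; writing $[\,\cdot\,]_{\cos},[\,\cdot\,]_{\sin}$ for the coefficients of $\cos\theta,\sin\theta$, this is
\[
\mathbf E_i:=\bigl(-\kappa_i[\overline{\mathcal F}_{i,1}]_{\sin}-[\overline{\mathcal F}_{i,2}]_{\cos},\ \ \kappa_i[\overline{\mathcal F}_{i,1}]_{\cos}-[\overline{\mathcal F}_{i,2}]_{\sin}\bigr)=0\in\mathbb R^{2},\qquad i=1,\dots,m.
\]

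\emph{Vanishing of the principal part.} Since $(\bm f_0,\bm g_0)\in\mathcal X^k_0=\ker D_{(\bm f,\bm g)}\bm{\mathcal F}(0,\mathbf x_0,0,0)$ and $\bm{\mathcal F}(0,\mathbf x,\cdot,\cdot)$ is affine in $(\bm f,\bm g)$, the $\tau$--shift does not affect $\overline{\bm{\mathcal F}}$ at $\varepsilon=0$. Reading off the first Fourier modes of \eqref{3-7} and \eqref{3-9} with Lemma \ref{l4-1}, the only $\mathbf x$--dependent ingredients are the source vector $\mathbf W_i=\sum_{j\ne i}2\pi\kappa_j\nabla G(x_i,x_j)-2\pi\kappa_i\nabla H(x_i,x_i)$ occurring in $\mathcal F_{i,1}$ and the vector $\mathbf V_i$ occurring in $\tilde{\mathcal F}_{i,2}$, and these satisfy the structural relation $\mathbf V_i=-\kappa_i\mathbf W_i^{\perp}$, with $\mathbf W_i$ and $\mathbf V_i$ equal, up to fixed nonzero constants, to $\nabla_{x_i}\mathcal W_m(\mathbf x)$ and its quarter--turn. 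A direct computation then shows that both components of the $\varepsilon=0$ contribution to $\mathbf E_i$ cancel \emph{identically}, for every $\mathbf x,\bm f,\bm g$. Consequently $\mathbf E_i=O_{\tau_1}(\varepsilon)$, so the equations $\mathbf E_i=0$ hold trivially at $\varepsilon=0$ and, for $\varepsilon\ne0$, are equivalent to $\tfrac1\varepsilon\mathbf E_i=0$ with $\tfrac1\varepsilon\mathbf E_i$ bounded on the whole parameter box.

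\emph{Recovery of $\nabla\mathcal W_m$ --- the crux.} Here I would use that each $\Gamma_i$ is a \emph{closed} curve: $\int_0^{2\pi}z_i'(\theta)\,d\theta=0$, equivalently $\int_0^{2\pi}(\mathbf e\cdot\mathbf s)|z_i'|\,d\theta=0$ for every fixed $\mathbf e$. Together with $\mathcal F_{i,2}=(I-P_0)\tilde{\mathcal F}_{i,2}$ (which discards $\int_0^{2\pi}\tilde{\mathcal F}_{i,2}\,d\theta$) and with the constraints defining $\mathcal X^k$ (which force the first moment of the vorticity $\tilde g_{i,\varepsilon}$ on $\Gamma_i$ about $x_i$ to be of higher order), the combination $\tfrac1\varepsilon\mathbf E_i$ reorganizes so that the $(\bm f,\bm g)$--dependent self--interaction of $\Gamma_i$ drops out --- the same cancellation mechanism as in the previous step, driven now by the antisymmetry of the Biot--Savart kernel and by $\Delta G=\Delta H=0$ off the diagonal --- and what survives is a fixed nonzero multiple of $\nabla_{x_i}\mathcal W_m(\mathbf x)$ plus a remainder that is uniformly of order $\varepsilon$. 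Thus, for $\varepsilon\ne0$, the system $\mathbf E_1=\dots=\mathbf E_m=0$ takes the form $\nabla\mathcal W_m(\mathbf x)=O_{\tau_1}(\varepsilon)$, i.e.\ \eqref{5-2}, while at $\varepsilon=0$ it is automatic. The hard part is exactly this reorganization: one must carry the $\varepsilon$--expansions of \eqref{2-4} and \eqref{2-6} far enough to see that the surviving $\mathbf x$--dependence is precisely a multiple of $\nabla_{x_i}\mathcal W_m$ and that all shape--dependent corrections are uniformly $O(\varepsilon)$ in the combination $\mathbf E_i$; that is what legitimizes replacing the opaque requirement $\overline{\bm{\mathcal F}}\in\mathcal Y^k$ by the single concise equation \eqref{5-2}.
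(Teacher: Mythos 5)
Your reduction to the $2m$ first--harmonic conditions $\mathbf E_i=0$ is the same as the paper's \eqref{5-3}, and the observation that the $\tau$--shift is invisible at $\varepsilon=0$ is harmless. The pivotal claim of your second step, however --- that the $\varepsilon^0$ contribution to $\mathbf E_i$ cancels \emph{identically} for every $\mathbf x$ --- is false, and it inverts the actual mechanism of the proposition. In the normalization in which the linearized operators are \eqref{4-1}, the Kirchhoff--Routh data enter the two equations at order one as $\mathbf U_i\cdot\mathbf n$ (normal equation) and $\kappa_i\,\mathbf U_i\cdot\mathbf s$ (tangential equation), where $\mathbf U_i=(U_{i,1},U_{i,2})$ is the regular (Routh) velocity at $x_i$, a fixed nonzero multiple of $\left(\nabla_{x_i}\mathcal W_m(\mathbf x)\right)^{\perp}$. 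Their first--harmonic coefficients are, up to one and the same positive factor, $(U_{i,1},U_{i,2})$ in $(\cos\theta,\sin\theta)$ for $\mathcal F_{i,1}$ and $(\kappa_iU_{i,2},-\kappa_iU_{i,1})$ for $\tilde{\mathcal F}_{i,2}$; inserting these into the two functionals defining $Y^k_i$ leaves $-2\kappa_iU_{i,2}$ and $2\kappa_iU_{i,1}$ --- the contributions \emph{add}, they do not cancel. The only cancellation available is the one the paper uses: the $(\bm f,\bm g)$--dependent parts drop out because at $\varepsilon=0$ their first harmonics are $M_1,N_1$ applied to those of $(f_i,g_i)$ and hence lie in the range that $Y^k_i$ encodes. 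This is exactly the content of \eqref{5-4}--\eqref{5-9}: the condition $\overline{\bm{\mathcal F}}\in\mathcal Y^k$ is, already at order one, $\kappa_i\mathbf U_i=O_{\tau_1}(\varepsilon)$, i.e.\ \eqref{5-2}. Your relation $\mathbf V_i=-\kappa_i\mathbf W_i^{\perp}$ comes from reading \eqref{3-10} literally; there are sign slips in \eqref{3-9}--\eqref{3-10} (compare with \eqref{5-6}--\eqref{5-7}), but a direct check from \eqref{2-4}/\eqref{2-6}, or from the physical conditions $\mathbf u\cdot\mathbf n=0$ and $\gamma\,\mathbf u\cdot\mathbf s=\mathrm{const}$ for a small circle in a locally constant exterior field, produces the opposite relative sign and hence the paper's conclusion. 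A sanity check: if the Kirchhoff--Routh terms lay in the range identically, then $\overline{\bm{\mathcal F}}\in\mathcal Y^k$ would impose no $O(1)$ restriction on $\mathbf x$ at all, i.e.\ the force balance that pins the sheets near a critical point would vanish from the leading order.

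Because of this, your third step is both unnecessary and unproven. Having cancelled the gradient where it actually lives, you are forced to hunt for it at the next order, and there the argument is only asserted: the ``reorganization'' invoking closedness of $\Gamma_i$, antisymmetry of the Biot--Savart kernel and harmonicity of $G$ and $H$ is never carried out, and you yourself flag it as the crux. In the correct accounting no second--order analysis is needed: one computes the four first--mode integrals \eqref{5-4}--\eqref{5-7} from the expansions \eqref{3-7} and \eqref{3-9} (whose remainders are already uniformly $O(\varepsilon)$ on the parameter box), notes that the $a_1,b_1$ terms cancel in the combinations \eqref{5-3}, and is left with \eqref{5-8}--\eqref{5-9}, which is \eqref{5-2}. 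As it stands, the proposal therefore does not prove the proposition: its leading--order cancellation claim is wrong, and the step that would deliver the Kirchhoff--Routh gradient is missing.
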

\begin{proof}
	For arbitrary $i=1,\ldots, m$, we take  $(\bm f, \bm g)\in V_1$  with
	\begin{equation*}
		\begin{split}
			&f_i(\theta)=\sum_{j=1}^\infty( a_j \cos(j\theta)+b_j\sin(j\theta)), \\  &g_i(\theta)=-\kappa_i a_1\cos(\theta)-\kappa_i b_1\sin(\theta)+\sum_{j=2}^\infty( c_j \cos(j\theta)+d_j\sin(j\theta)).
		\end{split}	
	\end{equation*}
	By the definition of $\mathcal Y^k$, in order to make $\overline{\bm{\mathcal{F}}}(\varepsilon, \tau, \mathbf{x}, \bm f, \bm g)= \bm{\mathcal{F}}(\varepsilon, \mathbf{x}, \bm f +\tau\bm{f}_0, \bm g+ \tau\bm{g}_0)\in \mathcal{Y}^k$, we need to ensure that the following equations hold true.
	\begin{equation}\label{5-3}
		\begin{split}
			&-\kappa_i \int\!\!\!\!\!\!\!\!\!\; {}-{}\mathcal{F}_{i,1}(\varepsilon, \mathbf{x}, \bm f +\tau\bm{f}_0, \bm g+ \tau\bm{g}_0) \sin(\theta)d\theta=\int\!\!\!\!\!\!\!\!\!\; {}-{}\mathcal{F}_{i,2}(\varepsilon, \mathbf{x}, \bm f +\tau\bm{f}_0, \bm g+ \tau\bm{g}_0) \cos(\theta)d\theta,\\
			&\kappa_i \int\!\!\!\!\!\!\!\!\!\; {}-{}\mathcal{F}_{i,1}(\varepsilon, \mathbf{x}, \bm f +\tau\bm{f}_0, \bm g+ \tau\bm{g}_0) \cos(\theta)d\theta=\int\!\!\!\!\!\!\!\!\!\; {}-{}\mathcal{F}_{i,2}(\varepsilon, \mathbf{x}, \bm f +\tau\bm{f}_0, \bm g+ \tau\bm{g}_0) \sin(\theta)d\theta,
		\end{split}
	\end{equation}
where $i=1,\ldots,m$.
	By \eqref{3-7}, \eqref{3-9} and calculations in Lemmas \ref{l4-1} and \ref{l4-2}, we obtain
	\begin{equation}\label{5-4}
		\begin{array}{ll}
			\int\!\!\!\!\!\!\!\!\!\; {}-{}\mathcal{F}_{i,1}(\varepsilon, \mathbf{x}, \bm f +\tau\bm{f}_0, \bm g+ \tau\bm{g}_0) \sin(\theta)d\theta
			=-\kappa_i a_1 -\sum_{j\not=i}2\pi \kappa_j\partial_{x_{i,1}} G(x_{i},x_{j})&\\
\qquad\qquad\qquad\qquad\qquad\qquad\,+ 2\pi  \kappa_i\partial_{x_{i,1}} H(x_{i},x_{i})
		+\varepsilon \int\!\!\!\!\!\!\!\!\!\; {}-{} \mathcal{R}_{1}\sin(\theta)d\theta,&\\
		\end{array}
\end{equation}
	
\begin{equation}\label{5-5}
\begin{array}{ll}
\int\!\!\!\!\!\!\!\!\!\; {}-{}\mathcal{F}_{i,1}(\varepsilon, \mathbf{x}, \bm f +\tau\bm{f}_0, \bm g+ \tau\bm{g}_0) \cos(\theta)d\theta =\kappa_i b_1
			+\sum_{j\not=i}2\pi \kappa_j \partial_{x_{i,2}} G(x_{i},x_{j})&\\
 \qquad\qquad\qquad\qquad\qquad\qquad\,- 2\pi  \kappa_i\partial_{x_{i,2}} H(x_{i},x_{i})
			+\varepsilon \int\!\!\!\!\!\!\!\!\!\; {}-{} \mathcal{R}_{1}\cos(\theta)d\theta,&\\
\end{array}
\end{equation}
	
\begin{equation}\label{5-6}
\begin{array}{ll}
\int\!\!\!\!\!\!\!\!\!\; {}-{}\mathcal{F}_{i,2}(\varepsilon, \mathbf{x}, \bm f +\tau\bm{f}_0, \bm g+ \tau\bm{g}_0) \sin(\theta)d\theta =\kappa_i^2 b_1
			-\sum_{j\not=i}2\pi \kappa_i \kappa_j \partial_{x_{i,2}} G(x_{i},x_{j})&\\
\qquad\qquad\qquad\qquad\qquad\qquad\,+ 2\pi \kappa_i^2\partial_{x_{i,2}} H(x_{i},x_{i})+\varepsilon \int\!\!\!\!\!\!\!\!\!\; {}-{} \mathcal{R}_{2}\sin(\theta)d\theta,&\\
\end{array}
\end{equation}	
	and	
	\begin{equation}\label{5-7}
		\begin{array}{ll}
\int\!\!\!\!\!\!\!\!\!\; {}-{}\mathcal{F}_{i,2}(\varepsilon, \mathbf{x}, \bm f +\tau\bm{f}_0, \bm g+ \tau\bm{g}_0) \cos(\theta)d\theta=\kappa_i^2a_1-\sum_{j\not=i}2\pi \kappa_i\kappa_j \partial_{x_{i,1}} G(x_{i},x_{j})&\\
\qquad\qquad\qquad\qquad\qquad\qquad\,+ 2\pi \kappa_i^2\partial_{x_{i,1}} H(x_{i},x_{i})+\varepsilon \int\!\!\!\!\!\!\!\!\!\; {}-{} \mathcal{R}_{2}\cos(\theta)d\theta.&\\
		\end{array}
	\end{equation}
	Then, by the above equations \eqref{5-4}-\eqref{5-7}, we conclude that \eqref{5-3} is equivalent to the following equations.
	
	\begin{equation}\label{5-8}
		\sum_{j\not=i} \kappa_i\kappa_j \partial_{x_{i,1}} G(x_{i},x_{j})-  \kappa_i^2\partial_{x_{i,1}} H(x_{i},x_{i})=\frac{\varepsilon}{4\pi}\left(\int\!\!\!\!\!\!\!\!\!\; {}-{} \mathcal{R}_{2}\cos(\theta)d\theta+\kappa_i\int\!\!\!\!\!\!\!\!\!\; {}-{} \mathcal{R}_{1}\sin(\theta)d\theta\right),
	\end{equation}
	and
	\begin{equation}\label{5-9}
		\sum_{j\not=i} \kappa_i\kappa_j \partial_{x_{i,2}} G(x_{i},x_{j}) -\kappa_i^2\partial_{x_{i,2}} H(x_{i},x_{i})=\frac{\varepsilon}{4\pi}\left(\int\!\!\!\!\!\!\!\!\!\; {}-{} \mathcal{R}_{2}\sin(\theta)d\theta-\kappa_i\int\!\!\!\!\!\!\!\!\!\; {}-{} \mathcal{R}_{1}\cos(\theta)d\theta\right).
	\end{equation}
	
	Since \eqref{5-8} and \eqref{5-9} hold for all $i=1,\ldots,m$, we arrive at \eqref{5-2} and complete the proof of this proposition.
	
\end{proof}

Now, we are ready to prove Theorem \ref{thm1}.

{\bf Proof of Theorem \ref{thm1}.}\,Since we have the nondegeneracy condition $\text{deg} \left(\nabla \mathcal{W}_m, \mathbf{x}_0\right)\not=0$, the equation \eqref{5-2} is solvable near $\mathbf{x}_0$ whenever $\varepsilon$ is small. We solve \eqref{5-2} and write the solution $\mathbf{x}_{\varepsilon,\tau}$ in the form $\mathbf{x}_{\varepsilon,\tau}=\mathbf{x}_0+\varepsilon  \bar{\mathcal{R}}_{\mathbf{x}}(\varepsilon, \tau, \bm f, \bm g)$. Then, we know that $\bar{\mathcal R}_{\mathbf{x}}$ defined on $ (-\varepsilon_0, \varepsilon_0) \times(-\tau_1, \tau_1) \times V_1$ is at least of $C^1$ smooth due to the regularity of $\bm {\mathcal{F}}$.

Now, set $$\overline{\bm {\mathcal{F}}}^*(\varepsilon,\tau, \bm f, \bm g):= \overline{\bm {\mathcal{F}}}(\varepsilon,\tau, \mathbf{x}_0+\varepsilon \bar{\mathcal R}_{\mathbf{x}}(\varepsilon, \tau, \bm f, \bm g), \bm f, \bm g).$$
Then, we conclude from Proposition \ref{p5-1} that $\overline{\bm {\mathcal{F}}}^*$ maps from $(-\varepsilon_0, \varepsilon_0)\times(-\tau_1, \tau_1) \times V_1$ to $\mathcal Y^k$. Moreover, $\overline{\bm {\mathcal{F}}}^*$ is $C^1$ continuous with respect to $\bm f$ and $\bm g$.  Next, we need to verify that $D_{(\bm f, \bm g)}\bm{\mathcal{F}}^*(0,0,0,0)$ is an isomorphism from $\mathcal X^k$ to $\mathcal Y^k$. In fact, by the chain rule we get
$$ D_{(\bm f, \bm g)} \overline{\bm {\mathcal{F}}}^*=D_{(\bm f, \bm g)} \overline{\bm {\mathcal{F}}}+ D_{\mathbf{x}}\overline{\bm {\mathcal{F}}}\cdot D_{(\bm f, \bm g)}\left(\mathbf{x}_0+\varepsilon \bar{\mathcal R}_{\mathbf{x}}(\varepsilon, \tau, \bm f, \bm g) \right),$$
which implies
$$ D_{(\bm f, \bm g)} \overline{\bm {\mathcal{F}}}^*(0,0,0,0)=D_{(\bm f, \bm g)} \bm{\mathcal{F}}(0, \mathbf{x}_0, 0,0).$$
Therefore, $D_{(\bm f, \bm g)} \overline{\bm {\mathcal{F}}}^*(0,0,0,0)$ is an isomorphism from $\mathcal X^k$ to $\mathcal Y^k$.

Now we  can apply implicit function theorem to $\overline{\bm {\mathcal{F}}}^*$ at the point $(0,0,0,0)$, and obtain  that there exist $\varepsilon_0>0$ and $0<\tau_0\leq \tau_1$ such that the solutions set
\begin{equation*}
	\left\{(\varepsilon, \tau, \bm f, \bm g)\in (-\varepsilon_0,\varepsilon_0)\times (-\tau_0,\tau_0)\times V_1 \ : \ \overline{\bm {\mathcal{F}}}^*(\varepsilon,\tau,\bm f, \bm g)=0\right\}
\end{equation*}
is not empty and can be parameterized by a two-dimensional surface $(\varepsilon,\tau)\in (-\varepsilon_0,\varepsilon_0)\times (-\tau_0,\tau_0)\to (\varepsilon, ,\tau, \bm{f}_{\varepsilon,\tau}, \bm{g}_{\varepsilon,\tau})$. So we  obtain a family of nontrivial vortex sheet solutions and finishes the proof of $(i)$ in Theorem \ref{thm1}. Since $(ii)$ of Theorem \ref{thm1}  is obvious, to end our proof we only need to show the convexity of the interior of $\Gamma_i$ for $i=1,\ldots, m$.
 This can be done by computing the sign of the curvature. Recall that $z_i(\theta)=x_i+R_i(\theta)(\cos\theta,\sin\theta)$ with $R_i(\theta)=1+\varepsilon(f_{\varepsilon,\tau,i}(\theta)+\tau f_{0,i})$. Given $\theta\in[0,2\pi)$, the signed curvature of $\Gamma_i$ at $z_i(\theta)$ is
\begin{align*}
	\varepsilon \kappa(\theta)=\frac{R_i(\theta)^2+2R'_i(\theta)^2-R_i(\theta)R_i''(\theta)}{\left(R_i(\theta)^2+R_i'(\theta)^2\right)^{\frac{3}{2}}}=\frac{1+O(\varepsilon)}{1+O(\varepsilon)}>0,
\end{align*}
for $\varepsilon$ and $\tau$ small, which implies the convexity and thus completes the proof of Theorem \ref{thm1}.

\qed

 We point out that for fixed $\varepsilon$, if $\tau_1\not=\tau_2$ with $0<\tau_1, \tau_2<\tau_0$, then obviously one has $\omega_{\varepsilon,\tau_1}\not=\omega_{\varepsilon,\tau_2}$. Thus, we have obtained a large family of stationary solutions with vortex sheet for every $\varepsilon>0$ small. Corollary \ref{thm2} follows immediately by taking $\tau=0$ in Theorem \ref{thm1}.

\phantom{s}
\thispagestyle{empty}

\end{document}